\mathchardef\mhyphen="2D
\newtheorem{theorem}{Theorem}[section]
\newtheorem{lemma}[theorem]{Lemma}
\newtheorem{corollary}[theorem]{Corollary}
\newtheorem{proposition}[theorem]{Proposition}
\theoremstyle{definition}
\newtheorem{definition}[theorem]{Definition}
\newtheorem{example}[theorem]{Example}
\newtheorem{remark}[theorem]{Remark}
\author{Daniel Lännström}
\address{Department of Mathematics and Natural Sciences,
Blekinge Institute of Technology,
SE-37179 Karlskrona, Sweden}
\email{{\scriptsize daniel.lannstrom@bth.se}}
\date{\today}
\keywords{epsilon-strongly graded ring, von Neumann regular ring, Leavitt path algebra, corner skew Laurent polynomial ring, partial crossed product}
\subjclass[2010]{16W50,16E50}
\title[A characterization of graded von Neumann regular rings]{A characterization of graded von Neumann regular rings with applications to Leavitt path algebras}
\begin{document}
\maketitle

\begin{abstract}
We provide a characterization of graded von Neumann regular rings involving the recently introduced class of nearly epsilon-strongly graded rings. As our main application, we generalize Hazrat's result that Leavitt path algebras over fields are graded von Neumann regular. More precisely, we show that a Leavitt path algebra $L_R(E)$ with coefficients in a unital ring $R$ is graded von Neumann regular if and only if $R$ is von Neumann regular. We also prove that both Leavitt path algebras and corner skew Laurent polynomial rings over von Neumann regular rings are semiprimitive and semiprime. Thereby, we generalize a result by Abrams and Aranda Pino on the semiprimitivity of Leavitt path algebras over fields.
\end{abstract}

\section{Introduction}

 An associative ring $R$ is called \emph{von Neumann regular} if $a \in a Ra$ holds for every $a \in R$. This type of ring was first considered by von Neumann in the study of operator algebras and has since then been extensively studied (see e.g. Goodearl's monograph \cite{goodearl1991neumann}). There are several well-known equivalent statements for a unital ring to be von Neumann regular. Notably, a unital ring $R$ is von Neumann regular if and only if every finitely generated left  (right) ideal of $R$ is generated by an idempotent. Examples of von Neumann regular rings are plentiful. For instance, any field is von Neumann regular. On the other hand, the ring of integers $\mathbb{Z}$ (or any non-field integral domain) is not von Neumann regular. 
 
Let $G$ be a group with neutral element $e$ and let $S = \bigoplus_{g \in G} S_g$ be a $G$-graded ring (see Section \ref{sub:graded}). The ring $S$ is called \emph{graded von Neumann regular} if, for every $g \in G$ and $a \in S_g$, the relation $a \in a S a$ holds. In the case of unital rings, this notation was introduced by Năstăsescu and van Oystaeyen \cite{nastasescu1982graded} and has been further studied in \cite{Balaba2012,hazrat2016graded,Oystaeyen1984,yahya1997note}. In this article, we will continue the study initiated by Hazrat \cite{hazrat2014leavitt} of non-unital graded von Neumann regular rings. There are results on the graded ideal structure of graded von Neumann regular rings (see \cite[Prop. 1-2]{hazrat2014leavitt} and Proposition \ref{prop:hazrat_char}) which make this class of rings interesting to us.  For the special class of unital strongly group graded rings, the following result highlights a connection between von Neumann regularity and graded von Neumann regularity:
 
\begin{theorem}[Năstăsescu and van Oystaeyen {\cite[Cor. C.I.1.5.3]{nastasescu1982graded}}]
Let $S=\bigoplus_{g \in G} S_g$ be a unital strongly $G$-graded ring. Then $S$ is graded von Neumann regular if and only if $S_e$ is von Neumann regular. 
\label{thm:hazrat}
\end{theorem}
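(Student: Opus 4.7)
The plan is to handle the two directions separately, with essentially all the work concentrated in the ``if'' direction. For the ``only if'' direction, I would assume $S$ is graded von Neumann regular, pick $a \in S_e$, write $a = asa$ for some $s \in S$, decompose $s = \sum_g s_g$ into homogeneous components, and observe that $a = \sum_g a s_g a$ is a sum with $a s_g a \in S_g$. Comparing the degree-$e$ parts of both sides forces $a s_g a = 0$ for $g \neq e$ and $a = a s_e a$ with $s_e \in S_e$, so $S_e$ is von Neumann regular. This direction is essentially bookkeeping.

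For the ``if'' direction, assume $S_e$ is von Neumann regular and fix a homogeneous $a \in S_g$. The goal is to produce $s \in S$ (which will turn out to lie in $S_{g^{-1}}$) with $a = asa$. By strong grading together with $1 \in S_e = S_g S_{g^{-1}}$, I would fix a ``partition of unity'' $\sum_{i=1}^n x_i y_i = 1$ with $x_i \in S_g$ and $y_i \in S_{g^{-1}}$; this single identity will play two roles, providing finite generation of the relevant ideal over $S_e$ and letting me transport information from $S_e$ back to $S_g$.

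The main construction is then the left ideal $L := S_{g^{-1}} a$ of $S_e$. Using $\sum_i x_i y_i = 1$, every element $sa \in L$ (with $s \in S_{g^{-1}}$) can be rewritten as $\sum_i (s x_i)(y_i a)$, where $s x_i \in S_e$, so $L$ is finitely generated as a left $S_e$-module by $y_1 a, \ldots, y_n a$. Von Neumann regularity of $S_e$ then forces $L = S_e \varepsilon$ for some idempotent $\varepsilon \in S_e$, and since $\varepsilon \in L$ we may write $\varepsilon = t a$ with $t \in S_{g^{-1}}$; the candidate witness is $s := t$.

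The main obstacle, and the place where the argument will need some care, is to upgrade this $S_e$-level information into the identity $a = ata$ in $S_g$. The key step will be to show $a\varepsilon = a$: since every element of $L = S_e \varepsilon$ is fixed by right multiplication by $\varepsilon$, one gets $y_i a \varepsilon = y_i a$ for all $i$, i.e., $y_i(a\varepsilon - a) = 0$; left-multiplying by $x_i$, summing over $i$, and using $\sum_i x_i y_i = 1$ collapses this to $a\varepsilon - a = 0$. Substituting $\varepsilon = ta$ then gives $a = a\varepsilon = ata$, the required relation.
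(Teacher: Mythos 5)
Your proof is correct and follows essentially the same route as the paper: the paper recovers this theorem as a special case of Theorem \ref{thm:char}, whose proof (Lemma \ref{lem:johan} together with Proposition \ref{prop:main}, explicitly modelled on Yahya's argument) is exactly your construction, with the element $\epsilon_g(x) \in S_g S_{g^{-1}}$ playing the role of your partition of unity $\sum_i x_i y_i = 1$. The only cosmetic difference is the final step, where the paper deduces $x \in S_g y$ and writes $x = x'y$ to get $xy = x$, whereas you collapse $y_i(a\varepsilon - a) = 0$ using $\sum_i x_i y_i = 1$.
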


Theorem \ref{thm:hazrat} was originally proved by Năstăsescu and van Oystaeyen using Dade's theorem (see e.g. \cite[Thm. 1.5.1] {hazrat2016graded}). An elementwise proof of Theorem \ref{thm:hazrat} was later given by Yahya \cite[Thm. 3]{yahya1997note}. In this article, we recover Theorem \ref{thm:hazrat} as a special case of our characterization of general graded von Neumann regular rings (see Theorem \ref{thm:char}). 


\smallskip

The notion of an \emph{epsilon-strongly graded ring} (see Definition \ref{def:epsilon}) was introduced by Nystedt, Öinert and Pinedo \cite{nystedt2016epsilon} as a generalization of unital strongly graded rings. This class of graded rings includes: unital partial crossed products (see \cite[pg. 2]{nystedt2016epsilon}), corner skew Laurent polynomial rings (see \cite[Thm. 8.1]{lannstrom2019graded}) and Leavitt path algebras of finite graphs (see \cite[Thm. 1.2]{nystedt2017epsilon}). The further generalization to \emph{nearly epsilon-strongly graded rings} (see Definition \ref{def:nearly}) was recently introduced by Nystedt and Öinert \cite{nystedt2017epsilon}. In this article, we study the relation between these two recently introduced classes of graded rings and the classical notion of graded von Neumann regular rings. Our main result is the following characterization:
\begin{theorem}
Let $S=\bigoplus_{g \in G} S_g$ be a $G$-graded ring. Then $S$ is graded von Neumann regular if and only if $S$ is nearly epsilon-strongly $G$-graded and
$S_e$ is von Neumann regular.
\label{thm:char}
\end{theorem}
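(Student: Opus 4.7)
The plan is to prove both directions by decomposing the relevant quasi-inverses according to the grading. For the forward implication, suppose that $S$ is graded von Neumann regular and fix $a \in S_g$. Choose $b = \sum_{h \in G} b_h \in S$ (with $b_h \in S_h$) such that $a = aba$. Each summand $ab_h a$ lies in $S_{ghg}$, so uniqueness of the homogeneous decomposition forces $a = a b_{g^{-1}} a$ with $b' := b_{g^{-1}} \in S_{g^{-1}}$. Then $ab' \in S_g S_{g^{-1}}$ is a left local unit for $a$ and $b'a \in S_{g^{-1}} S_g$ is a right local unit, so $S$ is nearly epsilon-strongly $G$-graded. Specializing to $g = e$ gives $b' \in S_e$ with $a b' a = a$, showing that $S_e$ is von Neumann regular.

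For the reverse implication, I would fix $a \in S_g$ and use the nearly epsilon-strongly graded condition to obtain a left local unit $\epsilon = \sum_{i=1}^n x_i y_i$ with $x_i \in S_g$, $y_i \in S_{g^{-1}}$ and $\epsilon a = a$. Setting $c_i := y_i a \in S_e$, one has $a = \sum_i x_i c_i$. Since $S_e$ is von Neumann regular, so is the matrix ring $M_n(S_e)$; embedding the column $(c_1, \ldots, c_n)^T$ as the first column of a matrix $C \in M_n(S_e)$ (with all other entries zero) and applying regularity to $C$ produces elements $d_1, \ldots, d_n \in S_e$ satisfying $c_i \bigl( \sum_j d_j c_j \bigr) = c_i$ for every $i$.

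The candidate quasi-inverse is $b := \sum_j d_j y_j \in S_{g^{-1}}$. A direct computation then gives
\[
\epsilon \cdot (aba) \;=\; \sum_i x_i c_i \Bigl( \sum_j d_j c_j \Bigr) \;=\; \sum_i x_i c_i \;=\; \epsilon a \;=\; a,
\]
while simultaneously $\epsilon \cdot (aba) = (\epsilon a) \cdot b a = aba$. Therefore $aba = a$, proving that $S$ is graded von Neumann regular.

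The main obstacle is the construction of $b$ in the reverse direction. Applying von Neumann regularity of $S_e$ to each $c_i$ separately does not suffice, since the cross-terms arising when one combines individual quasi-inverses into an element of $S_{g^{-1}}$ need not vanish. Passing through the matrix ring $M_n(S_e)$ is what yields a single element $\sum_j d_j c_j \in S_e$ acting as a simultaneous right local unit for all of the $c_i$, which is precisely the ingredient the final computation requires.
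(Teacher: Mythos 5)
Your forward direction is exactly the paper's argument (decompose the quasi-inverse and use uniqueness of homogeneous decompositions, as in Proposition \ref{prop:ny}, Proposition \ref{prop:nearly} and Lemma \ref{lem:1}), so there is nothing to add there. In the reverse direction your strategy is sound and the closing computation $aba = \sum_i x_i\bigl(c_i\sum_j d_jc_j\bigr) = \sum_i x_ic_i = a$ is correct, but it hinges on one unjustified step: the assertion that $M_n(S_e)$ is von Neumann regular because $S_e$ is. That implication is classical for \emph{unital} rings, whereas under the hypotheses of the theorem $S_e$ need not be unital --- it is merely s-unital (Proposition \ref{prop:s-unital1}); the principal component of a Leavitt path algebra of an infinite graph is a relevant example. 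The fact does remain true for non-unital von Neumann regular rings (for instance by viewing $S_e$ as a regular ideal of its unitalization and adapting Goodearl's direct-summand argument to finitely generated submodules of $(S_e^1)^n$ contained in $S_e^n$), but this is a genuine piece of work that you have elided, and it is precisely the kind of unital-versus-s-unital distinction the rest of the argument has to respect.

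The gap is easily repaired with the paper's own machinery, and doing so reveals that the two proofs are essentially the same argument. All you need from the matrix ring is a single element $u = \sum_j d_jc_j$ of the left ideal $I = S_ec_1 + \dots + S_ec_n$ satisfying $c_iu = c_i$ for all $i$. By Proposition \ref{prop:von1} (the s-unital version of the idempotent characterization, which the paper establishes), the finitely generated left ideal $I$ equals $S_ey$ for an idempotent $y \in S_e$; then $y = y^2 \in I$ expresses $y$ as $\sum_j d_jc_j$, while $c_i \in S_ey$ gives $c_iy = c_i$, which is exactly your simultaneous right unit. This is the content of Lemma \ref{lem:johan} and Proposition \ref{prop:main}, where the idempotent generator $y$ of $S_{g^{-1}}a$ is written as $y = ra$ with $r \in S_{g^{-1}}$ and one checks $ay = a$, hence $a = ara$. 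Your matrix-ring detour is a legitimate alternative once the non-unital matrix fact is supplied, but the idempotent-generation route delivers the simultaneous quasi-inverse with less overhead.
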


By applying Theorem \ref{thm:char}, we generalize Theorem \ref{thm:hazrat} to the class of epsilon-strongly graded rings (see Corollary \ref{cor:epsilon}). This allows us to characterize when unital partial crossed products (see Corollary \ref{cor:partial_crossed_products}), corner skew Laurent polynomial rings (see Corollary \ref{cor:corner}) and Leavitt path algebras over unital rings (see Theorem \ref{thm:big}) are graded von Neumann regular.

\subsection{Applications to Leavitt path algebras}

Given a directed graph $E$ and a field $K$, the \emph{Leavitt path algebra} $L_K(E)$ is an associative $\mathbb{Z}$-graded $K$-algebra. These algebras were introduced by Ara, Moreno and Pardo \cite{ara2007nonstable} and independently by Abrams and Aranda Pino \cite{abrams2005leavitt}. In the 15 years since their introduction, Leavitt path algebras have found  applications in general ring theory and matured into a research topic of their own (see e.g. \cite{abrams2017leavitt}). There are many results in the literature relating properties of the graph $E$ with algebraic properties of $L_K(E)$. For instance, $L_K(E)$ is von Neumann regular if and only if $E$ is an acyclic directed graph (see \cite{abrams2010regularity}). For graded von Neumann regularity, Hazrat has obtained the following result:

\begin{theorem}[Hazrat {\cite{hazrat2014leavitt}}]
Let $K$ be a field and let $E$ be a directed graph. Then the $\mathbb{Z}$-graded Leavitt path algebra $L_K(E)$ is graded von Neumann regular.
\label{thm:hazrat_lpa}
\end{theorem}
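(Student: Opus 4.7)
The plan is to obtain Theorem \ref{thm:hazrat_lpa} as a direct application of the main characterization, Theorem \ref{thm:char}. Applied to $S = L_K(E)$ with the natural $\mathbb{Z}$-grading, Theorem \ref{thm:char} reduces the claim to verifying two conditions: (a) that $L_K(E)$ is nearly epsilon-strongly $\mathbb{Z}$-graded, and (b) that the degree-zero component $L_K(E)_0$ is von Neumann regular.

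For (a), I would invoke the result of Nystedt and Öinert from \cite{nystedt2017epsilon}, which is already cited in the excerpt: Leavitt path algebras of arbitrary directed graphs are nearly epsilon-strongly $\mathbb{Z}$-graded. (The stronger property of being epsilon-strongly graded requires $E$ to be finite, but we only need the weaker hypothesis here.) So (a) is essentially a reference, not a calculation.

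For (b), I would use the classical structural fact that over a field, $L_K(E)_0$ is an \emph{ultramatricial} $K$-algebra, i.e.\ a directed union of finite direct sums of matrix algebras of the form $M_n(K)$. Each $M_n(K)$ is von Neumann regular because $K$ is a field, and von Neumann regularity passes to directed unions (the witness $b$ for $a \in aMa$ exists at some finite stage and remains a witness in the union). Hence $L_K(E)_0$ is von Neumann regular. Combining (a) and (b) with Theorem \ref{thm:char} yields that $L_K(E)$ is graded von Neumann regular.

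The only nontrivial obstacle is the reliance on the two external inputs just described; the content of Theorem \ref{thm:hazrat_lpa} is really repackaged by Theorem \ref{thm:char} into those two structural facts about $L_K(E)$, and both are already in the literature. A short proof should therefore fit in a few lines of citation-and-application once the characterization is in hand.
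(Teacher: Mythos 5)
Your proposal is correct in substance but follows a different route from the paper for the case of infinite graphs. The paper does not prove Theorem \ref{thm:hazrat_lpa} in isolation; it recovers it as the field case of Theorem \ref{thm:big}, whose proof applies the characterization only to \emph{finite} graphs (via Corollary \ref{cor:epsilon} and the explicit description of $(L_R(E))_0$ as a direct limit of matricial rings, Theorem \ref{thm:dn}) and then handles arbitrary graphs by writing $L_R(E)$ as a direct limit of Leavitt path algebras of finite graphs (Proposition \ref{prop:finite_graphs}) and invoking Lemma \ref{lem:direct_lim}. You instead apply Theorem \ref{thm:char} directly to an arbitrary graph, which shifts all the work onto the claim that $L_K(E)_0$ is locally matricial for \emph{every} graph $E$. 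That claim is true over a field, but be careful with how you cite it: the cleanest versions in the literature (and the one used in this paper, Theorem \ref{thm:dn}) are for finite or row-finite graphs, where $D_n$ decomposes as a product of matrix algebras. For graphs with infinite emitters the relation $v=\sum_{f\in s^{-1}(v)}ff^*$ is not imposed, so the $D_n$ are no longer such products; they are still directed unions of matricial algebras (each $Kv+M_F(K)$ splits as $K(v-\sum_{f\in F}ff^*)\oplus M_F(K)$ for finite $F\subseteq s^{-1}(v)$), but this needs to be said or sourced for non-row-finite graphs --- the paper deliberately avoids this bookkeeping by reducing at the level of graphs rather than at the level of the zero component. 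Also, ``ultramatricial'' usually presupposes a countable union; ``locally matricial'' is the right term for arbitrary graphs. What your route buys is a shorter argument over a field; what the paper's route buys is that the same reduction works verbatim over an arbitrary von Neumann regular coefficient ring, which is exactly what Theorem \ref{thm:big} requires.
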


 
Tomforde \cite{tomforde2011leavitt} introduced Leavitt path algebras over commutative unital rings and proved that many results carry over to his generalized setting. Leavitt path algebras over $\mathbb{Z}$ were considered by Johansen and S{\o}rensen  \cite{johansen2016cuntz} in connection to the classification program of Leavitt path algebras. In this article, we follow Hazrat \cite{hazrat2013graded} and consider Leavitt path algebras $L_R(E)$ where $R$ is a general, possibly non-commutative associative ring. We want to relate algebraic properties of the ring $R$ to algebraic properties of $L_R(E)$.  In that vein, we will establish the following generalization of Theorem \ref{thm:hazrat_lpa}:

\begin{theorem}
Let $R$ be a unital ring and let $E$ be a directed graph. Then the $\mathbb{Z}$-graded Leavitt path algebra $L_R(E)$ is graded von Neumann regular if and only if $R$ is von Neumann regular.
\label{thm:big}
\end{theorem}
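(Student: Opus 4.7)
The plan is to invoke the characterization in Theorem~\ref{thm:char}, which reduces the statement to two claims: first, that $L_R(E)$ is nearly epsilon-strongly $\mathbb{Z}$-graded for every directed graph $E$ and every unital ring $R$; second, that its degree-zero component $L_R(E)_0$ is von Neumann regular if and only if $R$ is.

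For the first claim, I would follow the strategy used by Nystedt and Öinert in the field case (cf.\ \cite{nystedt2017epsilon}). The epsilon-type witnesses are built from finite sums of the form $\sum \alpha \alpha^*$ and $\sum \beta^* \beta$ indexed by finite path sets, and the coefficient ring enters only through its centrality. Hence the proof transfers to an arbitrary unital $R$ essentially verbatim, yielding a nearly epsilon-strongly graded structure in general (and an epsilon-strongly graded structure when $E$ is finite). For the forward direction of the second claim, one uses the standard description of $L_R(E)_0$ as a directed union of direct sums of matrix rings over $R$, indexed by paths of bounded length. Since matrix rings over von Neumann regular rings are von Neumann regular, and the class of von Neumann regular rings is closed under directed unions \cite{goodearl1991neumann}, $L_R(E)_0$ is von Neumann regular whenever $R$ is.

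For the converse, suppose $L_R(E)$ is graded von Neumann regular; we may assume $E^0 \neq \emptyset$, since otherwise $L_R(E) = 0$. Fix a vertex $v \in E^0$. For each $r \in R$, the element $rv \in L_R(E)_0$ satisfies $rv = rv \cdot s \cdot rv$ for some $s \in L_R(E)$. Since $rv$ is homogeneous of degree zero, we may decompose $s$ into homogeneous components and keep only the degree-zero part, and then replace it by $vsv$, so that $s \in v L_R(E)_0 v$. Writing $s = r'v + s'$, where $r' \in R$ and $s'$ lies in the $R$-linear span of monomials $\alpha \beta^*$ with $s(\alpha) = s(\beta) = v$ and $|\alpha| = |\beta| \geq 1$, and using that $R$ is central in $L_R(E)$, the relation $rv = rv \cdot s \cdot rv$ yields, after comparing coefficients in the standard basis of $v L_R(E)_0 v$, the equation $r = r r' r$; hence $R$ is von Neumann regular. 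I expect the principal obstacle to lie in the first step: setting up the explicit epsilon-type witnesses for $L_R(E)$ over an arbitrary unital coefficient ring (and handling graphs with infinite emitters) is somewhat delicate, though essentially a bookkeeping extension of the field-case argument.
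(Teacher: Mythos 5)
Your overall plan---reduce via Theorem~\ref{thm:char} to (i) near epsilon-strongness of $L_R(E)$ and (ii) von Neumann regularity of $(L_R(E))_0$---is sound in principle, and step (i) is fine (it is exactly the Nystedt--Öinert result the paper quotes). The genuine gap is in step (ii), forward direction, for \emph{infinite} graphs. The description of $(L_R(E))_0$ as a directed union of (direct sums of) matrix rings over $R$ indexed by paths of bounded length is the content of Theorem~\ref{thm:dn}, which holds for \emph{finite} graphs; once $E$ has infinite emitters the relation (v) of Definition~\ref{def:cohn} is unavailable at those vertices, and the filtration pieces $D_n$ are no longer direct sums of matrix rings (e.g.\ for the graph with one vertex $v$ emitting infinitely many loops, $D_1$ is an extension of $R$ by a non-unital ideal of finitely supported infinite matrices, not a product of matrix rings). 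The paper explicitly flags that this approach ``does not seem to work'' for non-finite $E$ and instead proves the finite case (Corollary~\ref{cor:finite}, via Corollary~\ref{cor:epsilon}), realizes a general $L_R(E)$ as a direct limit of Leavitt path algebras of finite graphs through the Cohn path algebra functor (Proposition~\ref{prop:finite_graphs}), and transfers graded von Neumann regularity along direct limits (Lemma~\ref{lem:direct_lim}). You would need either to supply this reduction yourself or to prove a structure theorem for $(L_R(E))_0$ valid in the presence of infinite emitters; as written, your argument only covers finite (or at best row-finite) graphs.

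There is also a flaw in your converse: ``comparing coefficients in the standard basis of $vL_R(E)_0v$'' is not legitimate, because the monomials $\alpha\beta^*$ do \emph{not} form a basis --- the relation $v=\sum_{f\in s^{-1}(v)}ff^*$ at regular vertices makes $\{v\}\cup\{\alpha\beta^*: |\alpha|=|\beta|\ge 1\}$ linearly dependent. Concretely, for the single-loop graph every monomial $e^n(e^*)^n$ equals $v$, so the correct witness is $r'+\sum_i r_i$ rather than $r'$, and no coefficient comparison of the kind you describe is available. The paper's Lemma~\ref{lem:leavitt_principal2} circumvents this by left-multiplying the identity $tv_0=(tv_0)x(tv_0)$ by $\alpha_m^*$ for a path $\alpha_m$ of \emph{maximal} length occurring in $x$, which collapses the expression into the span of ghost paths; these are $R$-linearly independent, and only then can one extract $t=tr_kt$. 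Your argument reaches the right conclusion but the extraction step needs this (or an equivalent) device to be valid.
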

\begin{remark}
The statement of Theorem \ref{thm:big} does not hold if $E$ is the null graph, i.e. the graph without any vertices or edges (see Remark \ref{ex:2}).
\end{remark}


Hazrat \cite{hazrat2014leavitt} outlines  an approach where  Theorem \ref{thm:hazrat} is used to prove Theorem \ref{thm:hazrat_lpa} for the proper subclass of strongly graded Leavitt path algebras. For the general case, however, he uses  a more involved technique based on corner skew Laurent polynomial rings. In this article, we employ Theorem \ref{thm:char} together with Hazrat's original proof idea to establish Theorem \ref{thm:big}. 


\smallskip

The rest of this article is organized as follows:

\smallskip

In Section \ref{sec:prelim}, we recall some preliminaries on non-unital von Neumann regular rings (Section \ref{sub:von}), group graded rings (Section \ref{sub:graded}), epsilon-strongly and nearly epsilon-strongly graded rings (Section \ref{sub:nearly}) and direct limits of graded rings (Section \ref{sub:limits}).

In Section \ref{sec:main} and Section \ref{sec:applications}, we prove Theorem \ref{thm:char} respectively Theorem \ref{thm:big}.

In Section \ref{sec:semiprime}, we show that a Leavitt path algebra over a von Neumann regular ring is both semiprimitive and semiprime (see Corollary \ref{cor:semiprimitive}). Our result generalizes a well-known result by Abrams and Aranda Pino \cite{abrams2008leavitt} for Leavitt path algebras over fields.

In Section \ref{sec:applications2}, we apply our results to unital partial crossed products (Corollary \ref{cor:partial_crossed_products}) and corner skew Laurent polynomial rings (Corollary \ref{cor:corner} and Corollary \ref{cor:semi2}).

\section{Preliminaries}
\label{sec:prelim}
Throughout this article, all rings are assumed to be associative but not necessarily unital.
\subsection{Non-unital von Neumann regular rings}
\label{sub:von}
 A ring $R$ is called \emph{s-unital} if $x \in xR \cap Rx$ for every $x  \in R$. Equivalently, a ring $R$ is s-unital if, for every $x \in R$, there exist some $e, e' \in R$ such that $x = ex = xe'$. A ring is called \emph{unital} if it is equipped with a non-zero multiplicative identity element. A subset $E$ of $R$ is called a \emph{set of local units} for $R$ if $E$ consists of commuting idempotents such that for every $x \in R$ there exists some $e \in E$ such that $x = ex = xe$. Note that a ring with a set of local units is s-unital. For more details about s-unital rings and rings with local units, we refer the reader to the survey article \cite{nystedt2018unital}.
 
  A ring $R$ is called \emph{von Neumann regular} if for every $x \in R$ there is some $ y \in R$ such that $x = x y x$. In fact, every von Neumann regular ring is s-unital:

\begin{proposition}(cf. \cite[Prop. 20]{nystedt2018unital})
Let $R$ be a ring. If $R$ is von Neumann regular, then $R$ is s-unital.
\label{prop:s-unital1}
\end{proposition}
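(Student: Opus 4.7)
The plan is essentially a one-line argument, so I will just sketch it directly. Given $x \in R$, the von Neumann regularity hypothesis supplies an element $y \in R$ with $x = xyx$. I would then set $e = xy$ and $e' = yx$, observe that $e, e' \in R$, and note that $x = (xy)x = ex$ and $x = x(yx) = xe'$. This simultaneously places $x$ in $Rx$ and in $xR$, which is exactly the s-unital condition $x \in xR \cap Rx$.

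The main (and only) subtlety is conceptual rather than technical: one must remember that s-unitality does not require a single two-sided local unit for $x$, only separate left and right local units $e$ and $e'$; the elements $xy$ and $yx$ need not coincide. Once this is observed, no further work is needed, so I do not anticipate any real obstacle. The proof amounts to rewriting the equation $x = xyx$ in two ways, and no induction, case analysis, or auxiliary construction is required.
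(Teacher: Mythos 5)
Your proof is correct and is essentially identical to the one in the paper: both take $y$ with $x = xyx$, set $e = xy$ and $e' = yx$, and read off $x = ex = xe'$. Nothing further is needed.
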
 
\begin{proof}
Take an arbitrary $x \in R$. Then there exists some $y \in R$ such that $x=xyx$. Letting $e := xy$ and $e' := yx$, we see that $x = xyx = ex = x e'$ and hence $R$ is s-unital.
\end{proof}
 
One of the famous classical characterizations of von Neumann regularity for unital rings generalizes to s-unital rings verbatim. We include parts of the proof for the convenience  of the reader:

\begin{proposition}
Let $R$ be an s-unital ring. Then the following assertions are equivalent:
\begin{enumerate}[(a)]
\begin{item}
$R$ is von Neumann regular;
\end{item}
\begin{item}
every principal right (left) ideal of $R$ is generated by an idempotent;
\end{item}
\begin{item}
every finitely generated right (left) ideal of $R$ is generated by an idempotent.
\end{item}
\end{enumerate}
\label{prop:von1}
\end{proposition}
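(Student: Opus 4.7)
The plan is to establish the equivalence chain (a) $\Rightarrow$ (b) $\Rightarrow$ (c) $\Rightarrow$ (a) for right ideals; the left-sided versions follow by a symmetric argument since von Neumann regularity is itself left-right symmetric. Throughout, the key point is that s-unitality ensures that for each $x \in R$ one has $x \in xR \cap Rx$, so the principal right ideal generated by $x$ is simply $xR$ (and analogously on the left), which is the correct substitute for the use of a unit element in the classical proof.

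For (a) $\Rightarrow$ (b), given $x \in R$, I would pick $y \in R$ with $x = xyx$ and set $e := xy$. Then $e^2 = xyxy = xy = e$, and $ex = xyx = x$, so $x \in eR$ and $e = xy \in xR$, whence $xR = eR$. For (c) $\Rightarrow$ (a), apply (c) to the one-generator ideal to write $xR = eR$ for some idempotent $e$; since $R$ is s-unital, $x \in xR = eR$ gives $x = et$ for some $t$, hence $ex = e^2 t = et = x$. Writing $e = xs$ for some $s \in R$ (using $e \in eR = xR$), one concludes $x = ex = xsx$, proving von Neumann regularity.

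The main step, and what I expect to be the only real obstacle, is (b) $\Rightarrow$ (c), which I would prove by induction on the number of generators $n$. The case $n = 1$ is (b). For the inductive step, suppose $I = x_1 R + \cdots + x_n R$ and by the induction hypothesis $x_1 R + \cdots + x_{n-1} R = eR$ for some idempotent $e$. Setting $y := x_n - e x_n$ yields $I = eR + yR$ together with $ey = 0$. By (b), write $yR = fR$ for some idempotent $f$; since $f \in yR$, the relation $ey = 0$ forces $ef = 0$. The classical trick is now to set $g := f - fe$: a direct check shows $g^2 = g$, $eg = ge = 0$, and $(e+g)^2 = e+g$, while $(e+g)R = eR + gR = eR + fR = I$. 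The only subtlety beyond the unital case is that the ``missing'' identity appearing implicitly in expressions like $1 - e$ must be replaced by the s-unital substitute $x \in xR$; apart from this bookkeeping, the argument is standard idempotent calculus.
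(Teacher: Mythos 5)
Your proof is correct, and for the two implications the paper actually writes out, namely $(a)\Rightarrow(b)$ and $(c)\Rightarrow(a)$, your argument is essentially identical to the paper's (same idempotent $e=xy$, same use of s-unitality to get $f\in fR=xR$ and $x\in xR=fR$). The one place you diverge is $(b)\Rightarrow(c)$: the paper simply cites Goodearl's Theorem 1.1, which is stated for unital rings, whereas you carry out the induction and check that the idempotent calculus survives in the s-unital setting. Your version goes through: with $y=x_n-ex_n$ one gets $ey=0$, hence $ef=0$ since $f=f^2\in fR=yR$, and with $g=f-fe$ the identities $g^2=g$, $eg=ge=0$ hold. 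The only computation you leave implicit is the justification of $eR+gR=eR+fR$; it follows from $gf=(f-fe)f=f-f(ef)=f$, so $f\in gR$, while $g=f\cdot f-f\cdot e\in fR$, giving $gR=fR$. Your explicit remark that s-unitality is what makes the ideal generated by $x$ equal to $xR$ (rather than $xR+\mathbb{Z}x$) is exactly the point that lets the classical unital proof transfer, and it is a worthwhile addition to what the paper records.
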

\begin{proof}
$(a) \Rightarrow (b):$ Take an arbitrary $x \in R$ and let $y \in R$ such that $x=xyx$. Consider the right ideal $xR$. Then $xy \in R$ is an idempotent such that $xR = xyR$. 

$(b) \Rightarrow (c):$ See \cite[Thm. 1.1]{goodearl1991neumann}.

$(c) \Rightarrow (a):$ Take an arbitrary $x \in R$. Then $xR =fR$ for some idempotent $f \in R$. Since $R$ is s-unital, $f \in fR=xR$ and hence $f = xy$ for some $y \in R$. Similarly, $x \in xR = fR$ and hence $x = f r$ for some $r \in R$. Then $x = fr = f^2 r = f(fr)= f x = xyx$. 
\end{proof}

\subsection{Group graded rings}
\label{sub:graded}
Let $G$ be a group with neutral element $e$. A \emph{$G$-grading} of a ring $S$ is a collection $\{ S_g \}_{g \in G}$ of additive subsets of $S$ such that  $S = \bigoplus_{g \in G} S_g$ and $S_g S_h \subseteq S_{g h}$ for all $g, h\in G$.  The ring $S$ is then called \emph{$G$-graded}.  If the stronger condition $S_g S_h = S_{gh}$ holds for all $g,h \in G$, then the grading is called \emph{strong} and $S$ is called \emph{strongly $G$-graded}. The subsets $S_g$ are called the \emph{homogeneous components} of $S$.  The \emph{principal component}, $S_e$, is a subring of $S$.  A \emph{homogeneous element} $s \in S$ is an element such that $s\in S_g$ for some $g \in G$. Every element of $S$ decomposes uniquely into a sum of homogeneous elements.  A left/right/two-sided ideal $I$ of $S$ is called a left/right/two-sided \emph{graded ideal} of $S$ if $I = \bigoplus_{g \in G} (I \cap S_g)$. 

Recall that a $G$-graded ring $S$ is \emph{graded von Neumann regular} if and only if $a \in aSa$ for every homogeneous element $a \in S$. However, it is possible to make this condition more precise. The following result is well-known, but we have chosen to include a proof for the convenience of the reader.

\begin{proposition}
A $G$-graded ring $S$ is graded von Neumann regular if and only if, for every homogeneous $a\in S_g$ there is some homogeneous $b \in S_{g^{-1}}$ such that $a=aba$.
\label{prop:ny}
\end{proposition}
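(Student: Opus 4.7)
The plan is to prove each direction separately, with the reverse (``if'') direction being essentially a tautology and the forward (``only if'') direction following from the uniqueness of the homogeneous decomposition.

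For the reverse direction, suppose that for every homogeneous $a \in S_g$ there exists a homogeneous $b \in S_{g^{-1}}$ with $a = aba$. Since $b \in S_{g^{-1}} \subseteq S$, this immediately gives $a \in aSa$ for every homogeneous $a$, which is exactly the definition of graded von Neumann regularity.

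For the forward direction, fix $g \in G$ and take $a \in S_g$. By graded von Neumann regularity there exists some $c \in S$ with $a = aca$. Decompose $c$ uniquely as $c = \sum_{h \in G} c_h$ with $c_h \in S_h$ (and only finitely many $c_h$ nonzero). Then
\[
a = aca = \sum_{h \in G} a c_h a,
\]
and each term $a c_h a$ lies in $S_g S_h S_g \subseteq S_{ghg}$. Since the left-hand side is homogeneous of degree $g$ and the homogeneous decomposition is unique, every term with $ghg \neq g$ must vanish. Thus, setting $b := c_{g^{-1}} \in S_{g^{-1}}$, we obtain $a = aba$ as desired.

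I do not anticipate any real obstacle: the only subtlety is the standard observation that the homogeneous decomposition is unique, which lets us extract the single summand of degree $g^{-1}$ from the middle of $aca$. This is the reason the condition $b \in S_{g^{-1}}$ is precisely the right one, since $g \cdot g^{-1} \cdot g = g$ is the unique $h$ for which $S_g S_h S_g \subseteq S_g$.
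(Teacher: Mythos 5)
Your proof is correct and follows essentially the same route as the paper: the ``if'' direction is immediate, and the ``only if'' direction decomposes the middle element into homogeneous components and uses uniqueness of the homogeneous decomposition to extract the degree-$g^{-1}$ summand. If anything, your version is slightly more explicit than the paper's, since you spell out that $a c_h a \in S_{ghg}$ and that $h = g^{-1}$ is the unique index contributing in degree $g$.
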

\begin{proof}
The `if' direction is clear. Conversely, take an arbitrary homogeneous element $a \in S_g$. By assumption, there exists some $b \in S$ such that $a = a ba $. Let $b = \sum_{h \in G} b_h$ be the decomposition of $b$. Note that $a = a ba = \sum_{h \in G} a b_h a$. Since the decomposition is unique, it follows that $b = b_{g^{-1}} \in S_{g^{-1}}$.
\end{proof}

 A $G$-graded ring that is von Neumann regular is graded von Neumann regular. On the other hand, the following is an example of a graded von Neumann regular ring which is not von Neumann regular:
\begin{example}
Let $K$ be a field and consider the Laurent polynomial ring $K[x,x^{-1}]$ with its canonical $\mathbb{Z}$-grading, i.e. $K[x, x^{-1}] = \bigoplus_{i \in \mathbb{Z}} K x^i$. A routine check shows that this gives a strong $\mathbb{Z}$-grading. Since $K$ is von Neumann regular, it follows by Theorem \ref{thm:hazrat} that $K[x, x^{-1}]$ is graded von Neumann regular. On the other hand, $K[x, x^{-1}]$ is an integral domain which is not a field. Therefore, $K[x, x^{-1}]$ is not von Neumann regular.
\end{example}

Let $S=\bigoplus_{g \in G} S_g$ be a $G$-graded ring and suppose that $E$ is a set of local units for $S$. If $E$ consists of homogeneous idempotents, then $E$ is called \emph{a set of homogeneous local units}. Notably, every Leavitt path algebra has a set of homogeneous local units (see Section \ref{sec:semiprime}). Moreover, Hazrat has established the following `graded version' of Proposition \ref{prop:von1}:

\begin{proposition}(\cite[Prop. 1]{hazrat2014leavitt})
Let $S=\bigoplus_{g \in G} S_g$ be a $G$-graded ring. Suppose that $S$ has a set of homogeneous local units. Then the following three assertions are equivalent:
\begin{enumerate}[(a)]
\begin{item}
$S$ is graded von Neumann regular;
\end{item}
\begin{item}
every principal right (left) graded ideal of $S$ is generated by a homogeneous idempotent;
\end{item}
\begin{item}
every finitely generated right (left) graded ideal of $S$ is generated by a homogeneous idempotent.
\end{item}
\end{enumerate}
\label{prop:hazrat_char}
\end{proposition}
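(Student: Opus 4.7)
The plan is to mirror the proof of the non-graded version in Proposition \ref{prop:von1}, keeping careful track of homogeneity at each step. A useful preliminary observation is that every nonzero homogeneous idempotent $f \in S_h$ must lie in $S_e$: the equality $f = f^2 \in S_{h^2}$ together with the directness of the graded decomposition forces $h = h^2 = e$. This ensures that every homogeneous idempotent we produce automatically sits in the principal component $S_e$. For $(a) \Rightarrow (b)$, I would take a principal graded right ideal, which may be assumed of the form $aS$ for some homogeneous $a \in S_g$, apply Proposition \ref{prop:ny} to obtain $b \in S_{g^{-1}}$ with $a = aba$, and then verify that $f := ab$ is a homogeneous idempotent in $S_e$ generating $aS$, exactly as in the ungraded case.

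For $(b) \Rightarrow (c)$, I would first reduce to the case of two generators by induction on the number of generators. A key reduction here is that a finitely generated graded right ideal always admits a finite homogeneous generating set, obtained by decomposing each generator into its homogeneous components. Given homogeneous $a_1, a_2$, apply (b) to write $a_1 S = e_1 S$ with $e_1 \in S_e$ a homogeneous idempotent; since $e_1 \in S_e$, the element $a_2 - e_1 a_2$ is still homogeneous, so a second application of (b) gives $(a_2 - e_1 a_2) S = e_2 S$ with $e_2 \in S_e$ idempotent. A short computation using $e_1 (a_2 - e_1 a_2) = 0$ together with $e_2 \in (a_2 - e_1 a_2) S$ yields $e_1 e_2 = 0$, after which the standard non-commutative trick $e := e_1 + e_2 - e_2 e_1$ produces a homogeneous idempotent in $S_e$ satisfying $e^2 = e$ and $eS = e_1 S + e_2 S = a_1 S + a_2 S$.

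For $(c) \Rightarrow (a)$, I would take a homogeneous $a \in S_g$, apply (c) to the principal graded right ideal $aS$ to obtain $aS = fS$ with $f \in S_e$ an idempotent, and then use s-unitality (which follows from the existence of local units) to write $a = fr$ and $f = as$ for some $r, s \in S$. Then $a = fr = f^2 r = f(fr) = fa = asa$, so $a \in aSa$. The statement for left ideals follows by a symmetric argument. The main technical obstacle in the whole proof is purely bookkeeping --- making sure every intermediate construction remains homogeneous --- which is resolved by the opening observation that the idempotents produced by (b) and (c) necessarily live in $S_e$ and thus do not disturb degrees of other homogeneous elements under multiplication.
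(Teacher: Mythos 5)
Your argument is correct. Note that the paper does not actually prove Proposition \ref{prop:hazrat_char}: it is quoted from Hazrat's work \cite[Prop. 1]{hazrat2014leavitt}, so there is no in-text proof to compare against. What you have written is the natural graded adaptation of the paper's own proof of the ungraded Proposition \ref{prop:von1}, and it is complete: the opening observation that a nonzero homogeneous idempotent necessarily lies in $S_e$ (since $f=f^2\in S_h\cap S_{h^2}$ forces $h=e$) is exactly the right bookkeeping device, the reduction of a finitely generated graded ideal to finitely many homogeneous generators is legitimate because gradedness of the ideal puts each homogeneous component of a generator back into the ideal (and s-unitality, which follows from the local units, recovers the original generators), and the orthogonalization $e:=e_1+e_2-e_2e_1$ with $e_1e_2=0$ works verbatim in the non-unital setting since $e_1,e_2\in S_e$ and all the needed memberships ($e_1\in e_1S$, etc.) follow from s-unitality. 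You even supply a graded version of the step $(b)\Rightarrow(c)$, which the paper's ungraded proof delegates to \cite[Thm. 1.1]{goodearl1991neumann}. The only point worth making explicit is the reading of ``principal graded right ideal'': your step $(a)\Rightarrow(b)$ assumes the generator is homogeneous, which is the standard convention here (and is all that the cycle of implications requires, since $(b)\Rightarrow(c)$ only invokes $(b)$ for homogeneous generators and an arbitrary finitely generated graded ideal is regenerated by the homogeneous components of its generators); a one-line remark to that effect would make the proof airtight under either reading.
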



\subsection{Nearly epsilon-strongly graded rings}
\label{sub:nearly}
Next, we recall two special types of group graded rings generalizing the classical notion of unital strongly group graded rings. Nystedt, Öinert and Pinedo \cite{nystedt2016epsilon} recently introduced the class of epsilon-strongly $G$-graded rings:

\begin{definition}(\cite[Prop. 7(iii)]{nystedt2016epsilon})
Let $S=\bigoplus_{g \in G} S_g$ be a $G$-graded ring. Suppose that for every $ g \in G$ there is an element $\epsilon_g \in S_g S_{g^{-1}}$ such that for every $ s\in S_g$ the relations $\epsilon_g s = s = s \epsilon_{g^{-1}}$ hold. Then $S$ is called \emph{epsilon-strongly $G$-graded}.
\label{def:epsilon}
\end{definition}


\begin{remark}
Let $S = \bigoplus_{g \in G}S_g$ be a $G$-graded ring. We make the following two remarks:

\begin{enumerate}[(a)]
\begin{item}
If $S$ is a unital strongly  $G$-graded ring, then $1 \in S_g S_{g_{-1}}$ for every $g \in G$ (see e.g. \cite[Prop. 1.1.1]{nastasescu1982graded}). In this case, $S$ is epsilon-strongly $G$-graded with $\epsilon_g := 1$ for every $g \in G$. This proves that unital strongly $G$-graded rings are epsilon-strongly $G$-graded. 
\end{item}
\begin{item}
If $S$ is an epsilon-strongly $G$-graded ring, then $S$ is a unital ring (see \cite[Prop. 3.8]{induced2018}). In other words, only unital rings admit epsilon-strong $G$-gradings.
\end{item}
\end{enumerate}
\label{rem:0}
\end{remark}

The following is an example of a $\mathbb{Z}$-graded ring that is epsilon-strongly $\mathbb{Z}$-graded but not strongly $\mathbb{Z}$-graded:

\begin{example}
Let $R$ be a unital ring and consider the following $\mathbb{Z}$-grading of the full matrix ring $M_2(R)$: 
\begin{equation*}
(M_2(R))_0 := \begin{pmatrix}
R & 0 \\
0 & R 
\end{pmatrix}, \quad (M_2(R))_{-1} := \begin{pmatrix}
0 & 0 \\
R & 0
\end{pmatrix}, \quad
(M_2(R))_{1} := \begin{pmatrix}
0 & R \\
0 & 0
\end{pmatrix},
\end{equation*}
and $(M_2(R))_i := \left\{ \bigl( \begin{smallmatrix}0 & 0\\ 0 & 0\end{smallmatrix}\bigr) \right\}$ for $|i| > 1$. Note that,
\begin{equation*}
(M_2(R))_1 (M_2(R))_{-1} = \begin{pmatrix}
R & 0 \\
0 & 0 
\end{pmatrix}, \quad (M_2(R))_{-1} (M_2(R))_1 = \begin{pmatrix}
0 & 0 \\
0 & R
\end{pmatrix}.
\end{equation*}
A routine check shows that $M_2(R)$ is epsilon-strongly $\mathbb{Z}$-graded with,
\begin{equation*}
\epsilon_1 := \begin{pmatrix}
1_R & 0 \\
0 & 0 
\end{pmatrix}, \quad 
\epsilon_{-1} := \begin{pmatrix}
0 & 0 \\
0 & 1_R
\end{pmatrix}, \quad
\epsilon_0 := \begin{pmatrix}
1_R & 0 \\
0 & 1_R
\end{pmatrix} \text{ and }
\epsilon_i := \begin{pmatrix}
0 & 0 \\
0 & 0
\end{pmatrix} \text{ for } |i| > 1.
\end{equation*}
However, since an epsilon-strong $\mathbb{Z}$-grading is strong if and only if $\epsilon_i = 1$ for every $i \in \mathbb{Z}$ (see \cite[Prop. 3.2]{nystedt2017epsilon}), it follows that the above $\mathbb{Z}$-grading of $M_2(R)$ is not strong. 
\label{ex:54}
\end{example}

Crucial to our investigation, Nystedt and Öinert have shown that a Leavitt path algebra associated to a finite directed graph is epsilon-strongly $\mathbb{Z}$-graded (see \cite[Thm. 1.2]{nystedt2017epsilon}). Seeking to generalize their result to include any Leavitt path algebra (i.e. possibly non-finite graphs), they introduced nearly epsilon-strongly graded rings. 

\begin{definition}(\cite[Prop. 3.3]{nystedt2017epsilon})
Let $S=\bigoplus_{g \in G} S_g$ be a $G$-graded ring. Suppose that for every $ g \in G$ and $s \in S_g$ there are elements $\epsilon_g(s) \in S_g S_{g^{-1}}, \epsilon_g'(s) \in S_{g^{-1}} S_g$  such that the relations $\epsilon_g(s) s = s = s \epsilon_g'(s)$ hold. Then $S$ is called \emph{nearly epsilon-strongly $G$-graded}.
\label{def:nearly}
\end{definition}

Every Leavitt path algebra is indeed nearly epsilon-strongly $\mathbb{Z}$-graded (see \cite[Thm. 1.3]{nystedt2017epsilon}). The following is a trivial example of a nearly epsilon-strongly $G$-graded ring:
\begin{example}
Let $G$ be a group and let $R$ be an s-unital ring that is not unital (for instance, let $R:=C_c(\mathbb{R})$ with pointwise multiplication). Put $R_e := R$ and $R_g := \{ 0 \}$ for every $g \ne e$. This gives a $G$-grading of $R$ called \emph{the trivial $G$-grading}. For every $x \in R$ there are some $e,e' \in R$ such that $x=ex=xe'$. Letting $\epsilon_e(x) := e \in R = R^2 = R_e R_e$ and $\epsilon_e'(x)= e' \in R = R^2 = R_e R_e$ in Definition \ref{def:nearly}, we see that $R$ is nearly epsilon-strongly $G$-graded. On the other hand, since $R$ is not unital, it follows from  Remark \ref{rem:0}(b) that $R$ cannot be epsilon-strongly $G$-graded.
\end{example}

%

\subsection{Direct limits in the category of graded rings}
\label{sub:limits}
We will recall some properties of the category of group graded rings. Let $S=\bigoplus_{g \in G} S_g$ and $T=\bigoplus_{g \in G} T_g$ be two $G$-graded rings. A ring homomorphism $\phi \colon S \to T$ is called \emph{graded} if $\phi(S_g) \subseteq T_g$ for every $g \in G$. If $\phi \colon S \xrightarrow{\sim} T$ is a graded ring isomorphism, then we write $S \cong_{\text{gr}} T$ and say that $S$ and $T$ are \emph{graded isomorphic}. Note that two graded isomorphic rings are also isomorphic but the reverse implication does not hold in general.  If $S \cong_{\text{gr}} T$, then $S$ is graded von Neumann regular if and only if $T$ is graded  von Neumann regular.

The category of $G$-graded rings will be denoted by $G \mhyphen \textrm{RING}$. The objects of this category are pairs $(S, \{S_g \}_{g \in G})$ where $S$ is a ring and $\{ S_g \}_{g \in G}$ is a $G$-grading of $S$. The morphisms of $G \mhyphen \textrm{RING}$ are the $G$-graded ring homomorphisms. 
Next, we consider direct limits in $G \mhyphen \textrm{RING}$. Let $\{ A_i \mid i \in I \}$ be a directed system of $G$-graded rings. For every $i \in I$, we have that $A_i = \bigoplus_{g \in G} (A_i)_g$. Recall (see \cite[II, §11.3, Rem. 3]{bourbaki1998algebra}) that $B = \varinjlim_{i \in I} A_i$ is a $G$-graded ring with homogeneous components $B_g = \varinjlim_{i \in I} (A_i)_g.$ In other words, the category $G \mhyphen \textrm{RING}$ has arbitrary direct limits. The following lemma is a graded version of a well-known result (see \cite[Prop. 5.2.14]{berrick2000categories}). We include a proof for the convenience of the reader.

\begin{lemma}
Let $\{ A_i \mid i \in I \}$ be a directed system of $G$-graded rings. Suppose that $A_i$ is graded von Neumann regular for every $i \in I$. Then $B=\varinjlim_{i} A_i$ is graded von Neumann regular.
\label{lem:direct_lim}
\end{lemma}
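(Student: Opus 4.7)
The plan is to reduce the statement to the componentwise description $B_g = \varinjlim_{i \in I}(A_i)_g$ and then invoke Proposition \ref{prop:ny} inside a single $A_i$.

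First, by Proposition \ref{prop:ny}, it suffices to show that for every $g \in G$ and every homogeneous $b \in B_g$ there exists some $c \in B_{g^{-1}}$ with $b = bcb$. Fix such a $g$ and such an element $b$. Let $\phi_i \colon A_i \to B$ denote the canonical graded ring homomorphism associated with the direct limit. By the description of the homogeneous components of $B$ recalled just before the lemma, we have $B_g = \varinjlim_{i \in I}(A_i)_g$, so there exists an index $i \in I$ and an element $a \in (A_i)_g$ such that $\phi_i(a) = b$.

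Next, since $A_i$ is graded von Neumann regular by hypothesis, Proposition \ref{prop:ny} (applied inside $A_i$) yields an element $d \in (A_i)_{g^{-1}}$ with $a = a d a$. Applying the graded ring homomorphism $\phi_i$ and setting $c := \phi_i(d)$, we obtain
\begin{equation*}
b = \phi_i(a) = \phi_i(a d a) = \phi_i(a)\phi_i(d)\phi_i(a) = b c b,
\end{equation*}
where $c = \phi_i(d) \in B_{g^{-1}}$ because $\phi_i$ respects the grading. Thus $b \in b B b$ with the required homogeneous witness, and a final appeal to Proposition \ref{prop:ny} shows that $B$ is graded von Neumann regular.

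I expect no serious obstacle: the argument is essentially a transcription of the classical proof for ordinary von Neumann regularity into the graded category. The only point requiring care is to make sure that the witness $c$ lies in the correct homogeneous component $B_{g^{-1}}$ and not just in $B$; this is precisely what is guaranteed by combining Proposition \ref{prop:ny} (which gives a homogeneous $d \in (A_i)_{g^{-1}}$) with the fact that the structural maps $\phi_i$ are graded.
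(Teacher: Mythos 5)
Your proof is correct and follows essentially the same route as the paper's: represent the homogeneous element $b \in B_g = \varinjlim_i (A_i)_g$ by some $a \in (A_i)_g$, apply graded von Neumann regularity in $A_i$, and push the identity $a = ada$ forward along the graded map $\phi_i$. The only (harmless) difference is that you additionally invoke Proposition \ref{prop:ny} to make the witness homogeneous in $(A_i)_{g^{-1}}$, whereas the paper simply takes any $s \in A_i$ with $a = asa$, which already suffices.
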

\begin{proof}
Let $(B=\varinjlim_{i} A_i, \phi_i)$ be the direct limit of $\{ A_i \mid i \in I \}$. Recall that the canonical functions $\phi_i \colon A_i \to B=\varinjlim_{i} A_i$ are graded ring homomorphisms. Take an arbitrary $g \in G$ and $b_g \in B_g = \varinjlim_{i \in I} (A_i)_g$. Then, $b_g = \phi_k(a_g)$ for some $ a_g \in (A_k)_g$ and $k \in I$. Since $A_k$ is graded von Neumann regular by assumption, it follows that there is some $s \in A_k$ such that $a_g = a_g s a_g$. Applying $\phi_k$ to both sides yields, $b_g = b_g \phi_k(s) b_g$ for $\phi_k(s) \in B$. Thus, $B$ is graded von Neumann regular.
\end{proof}

\section{Main result}
\label{sec:main} 
In this section, we prove our main result: Theorem \ref{thm:char}. We first show that there are $G$-graded rings $S$ such that $S_e$ is von Neumann regular while $S$ is not graded von Neumann regular. 
\begin{example}
Let $R$ be a von Neumann regular ring (e.g. a field) and consider the polynomial ring $R[x] = \bigoplus_{i \geq 0} R x^i$. By putting $S_i := R x^i$ for $i \geq 0$ and $S_i := \{ 0 \}$ for $i <0$, we get a $\mathbb{Z}$-grading of $R[x]$. Note that $x^2 \not \in (x^2) R[x] (x^2)$. Hence, $R[x]$ is not graded von Neumann regular. This example shows that the conclusion of Theorem \ref{thm:hazrat} does not hold for a general group graded ring. 
\label{ex:3a}
\end{example}

We now consider necessary conditions for a ring to be graded von Neumann regular. The following result is well-known and follows from Proposition \ref{prop:ny}:

\begin{lemma}
Let $S= \bigoplus_{g \in G}S_g$ be a $G$-graded ring. If $S$ is graded von Neumann regular, then $S_e$ is von Neumann regular. 
\label{lem:1}
\end{lemma}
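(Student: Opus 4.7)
The plan is to give a very short argument that reduces the statement directly to Proposition \ref{prop:ny}. The key observation is that $e = e^{-1}$, so the ``refined'' version of graded von Neumann regularity forces the inner element $b$ witnessing $a = aba$ for $a \in S_e$ to lie in $S_e$ itself.

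More concretely, first I would fix an arbitrary $a \in S_e$. Since $a$ is homogeneous and $S$ is graded von Neumann regular, Proposition \ref{prop:ny} supplies some $b \in S_{e^{-1}}$ with $a = aba$. Because $e^{-1} = e$, this $b$ already lies in $S_e$. Thus every element of $S_e$ has a quasi-inverse inside $S_e$, which is exactly the definition of $S_e$ being a von Neumann regular ring. The proof is then complete in essentially two lines.

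I do not expect any real obstacle here: the content is entirely in having already proved Proposition \ref{prop:ny}, which upgrades the \emph{a priori} statement ``$a \in aSa$'' to ``$a \in a S_{g^{-1}} a$'' for $a \in S_g$. Without that refinement one would only get $a = aba$ with $b \in S$, and one would still need to argue that the degree-$e$ component of $b$ suffices; but that refinement is itself immediate by projecting $b = \sum_h b_h$ onto homogeneous components and using uniqueness of the grading, exactly as in the proof of Proposition \ref{prop:ny}. So the lemma is essentially a corollary of Proposition \ref{prop:ny} specialized to $g = e$, and the only thing to write is that substitution.
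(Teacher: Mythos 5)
Your argument is correct and is precisely the route the paper intends: the lemma is stated there with the remark that it ``follows from Proposition \ref{prop:ny}'', and your specialization to $g=e$ (so that $b\in S_{e^{-1}}=S_e$) is exactly that deduction. Nothing is missing.
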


We show that all graded von Neumann regular rings are nearly epsilon-strongly graded (cf. Proposition \ref{prop:s-unital1}).

\begin{proposition}
Let $S=\bigoplus_{g \in G} S_g$ be a $G$-graded ring. If $S$ is graded von Neumann regular, then $S$ is nearly epsilon-strongly $G$-graded. 
\label{prop:nearly}
\end{proposition}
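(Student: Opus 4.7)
The plan is to imitate the proof of Proposition \ref{prop:s-unital1} (which established that a von Neumann regular ring is s-unital) in the graded setting, using the refinement of the graded von Neumann condition provided by Proposition \ref{prop:ny}.

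More concretely, given an arbitrary $g \in G$ and $s \in S_g$, I would first apply Proposition \ref{prop:ny} to obtain an element $b \in S_{g^{-1}}$ with $s = sbs$. I would then set
\[
\epsilon_g(s) := sb \in S_g S_{g^{-1}}, \qquad \epsilon_g'(s) := bs \in S_{g^{-1}} S_g,
\]
and verify immediately from $s = sbs$ that $\epsilon_g(s)\, s = (sb)s = sbs = s$ and $s\, \epsilon_g'(s) = s(bs) = sbs = s$. By Definition \ref{def:nearly}, this shows that $S$ is nearly epsilon-strongly $G$-graded.

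There is no real obstacle here: the key point is simply that graded von Neumann regularity gives a pseudo-inverse $b$ whose degree $g^{-1}$ is exactly what is needed to guarantee that the one-sided ``local units'' $sb$ and $bs$ land in the required products $S_g S_{g^{-1}}$ and $S_{g^{-1}} S_g$. Without the refinement in Proposition \ref{prop:ny}, a generic pseudo-inverse $b \in S$ would only yield $sb, bs \in S$ rather than in the correct homogeneous products, so that step is essential.
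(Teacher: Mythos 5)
Your proposal is correct and coincides with the paper's own proof: both invoke Proposition \ref{prop:ny} to obtain a homogeneous pseudo-inverse $b \in S_{g^{-1}}$ and then take $\epsilon_g(s) = sb$ and $\epsilon_g'(s) = bs$. Your closing remark about why the refinement in Proposition \ref{prop:ny} is essential is exactly the right observation.
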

\begin{proof}
Take an arbitrary $g \in G$ and $s \in S_g$. To prove that $S$ is nearly epsilon-strongly $G$-graded, we need to show that there exist elements $\epsilon_g(s) \in S_g S_{g^{-1}}$ and $\epsilon'_g(s) \in S_{g^{-1}} S_g$ such that $\epsilon_g(s) s =  s = s \epsilon_g'(s)$. Since $S$ is graded von Neumann regular, it follows by Proposition \ref{prop:ny} that there is some $b \in S_{g^{-1}}$ such that $s = s b s$. Then, $\epsilon_g(s) := sb \in S_g S_{g^{-1}}$ and $\epsilon_g'(s) := bs \in S_{g^{-1}}S_g$ satisfy the requirement. Hence, $S$ is nearly epsilon-strongly $G$-graded.
\end{proof}

\begin{remark}
Theorem \ref{thm:hazrat_lpa} together with Proposition \ref{prop:nearly} implies that every Leavitt path algebra over a field is nearly epsilon-strongly $\mathbb{Z}$-graded. The stronger statement that  every Leavitt path algebras over a general unital ring is nearly epsilon-strongly $\mathbb{Z}$-graded has been proved by Nystedt and Öinert \cite[Thm. 1.3]{nystedt2017epsilon}.

\end{remark}
The following definition was introduced by Clark, Exel and Pardo \cite{clark2018generalized} in the context of Steinberg algebras: 

\begin{definition}(\cite[Def. 4.5]{clark2018generalized})
Let $S=\bigoplus_{g \in G}S_g$ be a $G$-graded ring. If $S_g = S_g S_{g^{-1}} S_g$ for every $g \in G$, then we say that $S$ is \emph{symmetrically $G$-graded}.
\label{def:sym}
\end{definition}
Moreover, Nystedt and Öinert \cite[Prop. 3.3]{nystedt2017epsilon}  proved that every nearly epsilon-strongly $G$-graded ring is symmetrically $G$-graded. In conclusion, the following relationship holds between the mentioned classes of group graded rings:

\begin{remark}
The following implications hold for an arbitrary $G$-grading $\{ S_g \}_{g \in G}$ of $S$:
\begin{equation*}
\text{unital strong } \implies \text{epsilon-strong }  \implies \text{nearly epsilon-strong } \implies \text{symmetrical}
\end{equation*}
\label{rem:1}
\end{remark}
\vspace{-1.5em}

The following corollary is a direct consequence of Proposition \ref{prop:nearly} and Remark \ref{rem:1}:
\begin{corollary}
Let $S=\bigoplus_{g \in G} S_g$ be a $G$-graded ring. If $S$ is graded von Neumann regular, then $S$ is symmetrically $G$-graded. 
\label{cor:sym}
\end{corollary}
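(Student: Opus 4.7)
The statement is advertised as a direct consequence of Proposition \ref{prop:nearly} and Remark \ref{rem:1}, so the plan is essentially to concatenate those two facts, with no new ideas required. I would proceed as follows.

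First, I would invoke Proposition \ref{prop:nearly} to conclude that $S$ is nearly epsilon-strongly $G$-graded. Then, by the chain of implications recorded in Remark \ref{rem:1} (the last link of which is attributed to \cite[Prop. 3.3]{nystedt2017epsilon}), $S$ is symmetrically $G$-graded, i.e.\ $S_g = S_g S_{g^{-1}} S_g$ for every $g \in G$. That would complete the proof.

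If one prefers a self-contained argument not routing through Remark \ref{rem:1}, the same thing can be done in one short stroke directly from Proposition \ref{prop:ny}. Fix $g \in G$. The inclusion $S_g S_{g^{-1}} S_g \subseteq S_g$ is automatic from the grading axiom $S_g S_h \subseteq S_{gh}$. For the reverse inclusion, take an arbitrary $s \in S_g$; by Proposition \ref{prop:ny} there exists $b \in S_{g^{-1}}$ with $s = sbs$, so $s = sbs \in S_g S_{g^{-1}} S_g$. Hence $S_g \subseteq S_g S_{g^{-1}} S_g$, and equality holds for every $g \in G$.

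There is no main obstacle here: the corollary is of the bookkeeping type, serving to place graded von Neumann regular rings within the hierarchy established in Remark \ref{rem:1}. The only thing to be mindful of in writing it up is to cite Proposition \ref{prop:nearly} (not Proposition \ref{prop:ny} alone), since the phrasing in the statement is meant to emphasize that the conclusion follows formally from the already-proved stronger fact that graded von Neumann regularity implies the nearly epsilon-strong property.
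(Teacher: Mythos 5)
Your proposal is correct and matches the paper exactly: the corollary is stated as a direct consequence of Proposition \ref{prop:nearly} and Remark \ref{rem:1}, which is precisely your first argument. Your alternative one-line argument via Proposition \ref{prop:ny} is also valid, but it adds nothing beyond what the paper's route already provides.
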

\begin{remark}
By Corollary \ref{cor:sym}, the elementwise condition of graded von Neumann regularity (cf. Proposition \ref{prop:ny}) implies the componentwise condition of symmetrical gradings, that is $S_g = S_g S_{g^{-1}} S_g$ for every $g \in G$. However, the reverse implication does not hold in general (see Example \ref{ex:1}(b)).
\end{remark}

%
%
%

Before proving our characterization, we need the following lemma:

\begin{lemma}
Let $S=\bigoplus_{g \in G} S_g$ be a nearly epsilon-strongly $G$-graded ring and suppose that $S_e$ is von Neumann regular. Then, for every $g \in G$ and $x \in S_g$, the left $S_e$-ideal $S_{g^{-1}}x$ is generated by an idempotent in $S_e$.
\label{lem:johan}
\end{lemma}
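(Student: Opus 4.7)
The plan is to reduce the statement to an application of Proposition \ref{prop:von1}(c) applied to $S_e$. Since $S_e$ is von Neumann regular, it is s-unital by Proposition \ref{prop:s-unital1}, so Proposition \ref{prop:von1} tells us that every \emph{finitely generated} left ideal of $S_e$ is generated by an idempotent. Note also that $S_{g^{-1}} x \subseteq S_{g^{-1}} S_g \subseteq S_e$ and that $S_e \cdot S_{g^{-1}} \subseteq S_{g^{-1}}$, so $S_{g^{-1}} x$ really is a left $S_e$-submodule of $S_e$. It therefore suffices to prove that $S_{g^{-1}} x$ is finitely generated as a left $S_e$-ideal.

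To do this, I would use the left near-epsilon-strongness. By Definition \ref{def:nearly} there exists $\epsilon_g(x) \in S_g S_{g^{-1}}$ with $\epsilon_g(x)\, x = x$. Since $\epsilon_g(x) \in S_g S_{g^{-1}}$ is by definition a finite sum, I would write
\[
\epsilon_g(x) = \sum_{i=1}^n c_i d_i, \qquad c_i \in S_g,\ d_i \in S_{g^{-1}},
\]
so that $x = \sum_{i=1}^n c_i (d_i x)$ with each $d_i x \in S_{g^{-1}} S_g \subseteq S_e$.

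Now for any $s \in S_{g^{-1}}$, a direct computation gives
\[
sx = s\, \epsilon_g(x)\, x = \sum_{i=1}^n (s c_i)\,(d_i x),
\]
and $sc_i \in S_{g^{-1}} S_g \subseteq S_e$. Hence $S_{g^{-1}} x \subseteq \sum_{i=1}^n S_e (d_i x)$. The reverse inclusion is immediate: $d_i x \in S_{g^{-1}} x$, and $S_e \cdot S_{g^{-1}} x \subseteq S_{g^{-1}} x$ because $S_e S_{g^{-1}} \subseteq S_{g^{-1}}$. Therefore
\[
S_{g^{-1}} x = \sum_{i=1}^n S_e (d_i x),
\]
which is a finitely generated left ideal of the von Neumann regular ring $S_e$.

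I do not anticipate a serious obstacle here; the only mildly delicate point is remembering that the elements $d_i x$ lie in $S_e$ so that Proposition \ref{prop:von1} can legitimately be applied to the finitely generated left ideal $\sum_i S_e(d_i x)$ of $S_e$. Once this is in place, Proposition \ref{prop:von1}(c) produces an idempotent $\varepsilon \in S_e$ with $S_{g^{-1}} x = S_e \varepsilon$, which is the required conclusion.
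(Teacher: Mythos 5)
Your proposal is correct and follows essentially the same route as the paper: expand $\epsilon_g(x)$ as a finite sum $\sum_i a_i b_i$ with $a_i \in S_g$, $b_i \in S_{g^{-1}}$, use $sx = \sum_i (s a_i)(b_i x)$ to exhibit $S_{g^{-1}}x$ as the finitely generated left $S_e$-ideal $\sum_i S_e(b_i x)$, and then invoke s-unitality (Proposition \ref{prop:s-unital1}) together with Proposition \ref{prop:von1}(c). Your explicit verification of the reverse inclusion and of $S_{g^{-1}}x \subseteq S_e$ is a welcome bit of extra care, but the argument is the same.
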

\begin{proof}
Take an arbitrary $g \in G$ and $x \in S_g$. Since $S$ is nearly epsilon-strongly $G$-graded, there exists some $\epsilon_g(x) \in S_g S_{g^{-1}}$ such that $\epsilon_g(x) x = x$. We can write $\epsilon_g(x) = \sum_{i=1}^k a_i b_i$ for some elements $a_1, a_2, \dots, a_k \in S_g$ and $b_1, b_2, \dots, b_k \in S_{g^{-1}}$. Let $c_i := b_i x \in S_{g^{-1}} x$ for $i \in \{ 1,2,\dots, k\}$. We claim that $S_{g^{-1}} x = S_e c_1 + S_e c_2 + \dots  + S_e c_k$. Indeed, let $s x \in S_{g^{-1}} x$ be an arbitrary element. Then, $$ sx = s (\epsilon_g(x) x) = s \Big (\sum_{i=1}^k a_i b_i \Big)x = \sum_{i=1}^k (s a_i) (b_i x) = \sum_{i=1}^k (s a_i) c_i.$$ Since $s a_i \in S_{g^{-1}} S_g  \subseteq S_e$, it follows that $S_{g^{-1}} x$ is finitely generated by $\{ c_1, c_2, \dots, c_k \}$ as a left $S_e$-ideal. Moreover, since $S_e$ is von Neumann regular, Proposition \ref{prop:s-unital1} implies that $S_e$ is s-unital. By Proposition \ref{prop:von1}, we have that $S_{g^{-1}}x$ is generated by an idempotent in $S_e$.
\end{proof}

The following proposition generalizes Yahya's proof (see \cite[Thm. 3]{yahya1997note}) of Theorem \ref{thm:hazrat}:

\begin{proposition}
Let $S=\bigoplus_{g \in G} S_g$ be a nearly epsilon-strongly $G$-graded ring. If $S_e$ is von Neumann regular, then $S$ is graded von Neumann regular.
\label{prop:main}
\end{proposition}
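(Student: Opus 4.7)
The plan is to use Lemma \ref{lem:johan} to produce a candidate quasi-inverse out of thin air, and then to check the von Neumann equation by invoking the near-epsilon-strong hypothesis. By Proposition \ref{prop:ny}, it suffices, for an arbitrary $g \in G$ and $x \in S_g$, to produce some $b \in S_{g^{-1}}$ with $x = xbx$.

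First I would apply Lemma \ref{lem:johan} to $x$ to obtain an idempotent $f \in S_e$ with $S_{g^{-1}} x = S_e f$. Since $S_e$ is von Neumann regular, it is s-unital by Proposition \ref{prop:s-unital1}, so $f \in S_e f = S_{g^{-1}} x$. Because $S_{g^{-1}}$ is an additive group, every element of $S_{g^{-1}} x$ is of the form $bx$ for a \emph{single} $b \in S_{g^{-1}}$; in particular $f = bx$ for some $b \in S_{g^{-1}}$. This $b$ is my candidate.

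It remains to show $xbx = x$, i.e. $xf = x$. Here the hypothesis that $S$ is nearly epsilon-strongly graded enters: choose $\epsilon_g(x) \in S_g S_{g^{-1}}$ with $\epsilon_g(x) x = x$, and write $\epsilon_g(x) = \sum_{i=1}^k a_i b_i$ with $a_i \in S_g$ and $b_i \in S_{g^{-1}}$, so that $x = \sum_{i=1}^k a_i (b_i x)$. Each $b_i x$ lies in $S_{g^{-1}} x = S_e f$, so $b_i x = s_i f$ for some $s_i \in S_e$, whence $(b_i x) f = s_i f^2 = s_i f = b_i x$ by idempotency of $f$. Multiplying $x = \sum_i a_i (b_i x)$ on the right by $f$ therefore gives $xf = \sum_i a_i (b_i x) f = \sum_i a_i (b_i x) = x$. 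Hence $x = xf = x(bx) = xbx$ with $b \in S_{g^{-1}}$, and Proposition \ref{prop:ny} concludes the proof.

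I do not expect any serious obstacles: Lemma \ref{lem:johan} already absorbs the delicate part (reducing an infinite-looking ideal to a finitely generated one and invoking the s-unital characterization of von Neumann regularity from Proposition \ref{prop:von1}). The only point that deserves an explicit sentence is the observation that $S_{g^{-1}} x = \{ bx : b \in S_{g^{-1}} \}$ as a set, so that the idempotent generator $f$ can be written as $bx$ for one element $b$ rather than as a finite sum; this is immediate from additivity of $S_{g^{-1}}$.
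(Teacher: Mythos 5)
Your proof is correct and follows essentially the same route as the paper: apply Lemma \ref{lem:johan} to get an idempotent generator $f$ of $S_{g^{-1}}x$, write $f = bx$ with $b \in S_{g^{-1}}$, and use the near-epsilon-strong identity $x = \epsilon_g(x)x$ to show $xf = x$. The only cosmetic difference is that you verify $xf=x$ by expanding $\epsilon_g(x)=\sum_i a_ib_i$ termwise, whereas the paper phrases the same computation as the containment $x \in S_gS_{g^{-1}}x \subseteq S_g f$ followed by $xf = x'f^2 = x'f = x$.
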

\begin{proof}
Suppose that $S_e$ is von Neumann regular. Take an arbitrary $g \in G$ and $0 \ne x \in S_g$. By Proposition \ref{prop:ny}, we need to show that there exists some $r \in S_{g^ {-1}}$ such that $x= x rx$. By Lemma \ref{lem:johan}, there is some idempotent $y \in S_e$ such that $S_{g^{-1}} x = S_e y$. Note that $y = y^2 \in S_e y = S_{g^{-1}} x$. Hence, there is some $r \in S_{g^{-1}}$ such that $y = r x$. Also note that,
\begin{equation}
S_g S_{g^{-1}} x = S_g (S_{g^{-1}} x) = S_g (S_e y) = S_g S_e y \subseteq S_g y.
\label{eq:a1}
\end{equation}
Since $S$ is assumed to be nearly epsilon-strongly $G$-graded, there exists some $\epsilon_g(x) \in S_g S_{g^{-1}}$ such that $\epsilon_g(x) x = x$. Now, using (\ref{eq:a1}), we have that $x = \epsilon_g(x) x \in S_g S_{g^{-1}} x \subseteq S_g y$, and hence there exists some $x' \in S_g$ such that $x=x'y$. But then $xy = (x'y)y = x' (yy) = x'y = x$. Thus, $x= xy = x r x$. Hence, $S$ is graded von Neumann regular. 
\end{proof}


Now we can prove our characterization of graded von Neumann regular rings:

\begin{proof}[Proof of Theorem 1.2]
Let $S=\bigoplus_{g \in G}S_g$ be a $G$-graded ring. Suppose that $S$ is graded von Neumann regular. Then Proposition \ref{prop:nearly} and Lemma \ref{lem:1} establish that $S$ is nearly epsilon-strongly $G$-graded and that $S_e$ is von Neumann regular, respectively. Conversely, suppose that $S$ is nearly epsilon-strongly $G$-graded and that $S_e$ is von Neumann regular. Then Proposition \ref{prop:main} implies that $S$ is graded von Neumann regular. 
\end{proof}


Since epsilon-strongly $G$-graded rings are nearly epsilon-strongly $G$-graded (see Remark \ref{rem:1}), the following result is a consequence of Theorem \ref{thm:char}.

\begin{corollary}
Let $S=\bigoplus_{g \in G} S_g$ be an epsilon-strongly $G$-graded ring. Then $S$ is graded von Neumann regular if and only if $S_e$ is von Neumann regular. 
\label{cor:epsilon}
\end{corollary}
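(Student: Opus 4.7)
The plan is to derive this statement directly from Theorem \ref{thm:char}, using the inclusion of classes recorded in Remark \ref{rem:1}. The entire content of the corollary is that, under the stronger hypothesis of being epsilon-strongly graded, the characterization of graded von Neumann regularity simplifies: one no longer needs to list the grading hypothesis separately, since it is built into the assumption.

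More concretely, assume first that $S_e$ is von Neumann regular. By Remark \ref{rem:1}, every epsilon-strongly $G$-graded ring is automatically nearly epsilon-strongly $G$-graded, so $S$ satisfies both hypotheses of the ``if'' direction of Theorem \ref{thm:char}. Applying that theorem yields that $S$ is graded von Neumann regular. Conversely, if $S$ is graded von Neumann regular, then Lemma \ref{lem:1} immediately gives that $S_e$ is von Neumann regular; alternatively, one could again invoke Theorem \ref{thm:char} and extract only the statement about $S_e$.

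There is no real obstacle here: the corollary is a straightforward specialization of the main theorem, and the only ``work'' is citing Remark \ref{rem:1} to move from epsilon-strong to nearly epsilon-strong. Because of this, I expect the proof to be essentially one or two sentences long, reading something like ``By Remark \ref{rem:1}, $S$ is nearly epsilon-strongly $G$-graded, and the claim now follows from Theorem \ref{thm:char}.''
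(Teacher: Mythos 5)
Your proposal is correct and matches the paper's own argument exactly: the paper likewise notes (citing Remark \ref{rem:1}) that epsilon-strongly $G$-graded rings are nearly epsilon-strongly $G$-graded and then derives the corollary immediately from Theorem \ref{thm:char}. Nothing further is needed.
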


\begin{remark}
The above result is a generalization of Theorem \ref{thm:hazrat}. Indeed, by applying Corollary \ref{cor:epsilon} to unital strongly group graded rings, which by Remark \ref{rem:1} are epsilon-strongly graded, we immediately recover Theorem \ref{thm:hazrat}.
\end{remark}


%
%

\section{Proof of Theorem \ref{thm:big}}
\label{sec:applications}

In this section, we prove that a Leavitt path algebra $L_R(E)$ is graded von Neumann regular if and only if $R$ is von Neumann regular (see Theorem \ref{thm:big}).

Let $E=(E^0, E^1, s, r)$ be a directed graph consisting of a vertex set $E^0$, an edge set $E^1$ and maps $s \colon E^1 \to E^0$ and $r \colon E^1 \to E^0$ specifying the source vertex $s(f)$ respectively range vertex $r(f)$ for each edge $f \in E^1$. Note that we allow the \emph{null-graph} which has no vertices and no edges. If $E$ is not the null-graph, i.e. $E^0 \ne \emptyset$, then we write $E \ne \emptyset$. A \emph{sink} is a vertex $v \in E^0$ such that $s^{-1}(v) = \emptyset$. An \emph{infinite emitter} is a vertex $v \in E^0$ such that $s^{-1}(v)$ is an infinite set. A vertex is called \emph{regular} if it is neither a sink nor an infinite emitter. The set of regular vertices of $E$ is denoted by $\text{Reg}(E)$.

For technical reasons we will consider a generalization of Leavitt path algebras introduced by Ara and Goodearl \cite{ara2012leavitt}:

\begin{definition}
Let $R$ be a unital ring and let $E=(E^0, E^1, s, r)$ be a directed graph. Moreover, let $X$ be any subset of $\text{Reg}(E)$. The \emph{Cohn path algebra relative to $X$}, denoted by $C_R^X(E)$, is the free $R$-algebra generated by the symbols, $$\{ v \mid v \in E^0 \} \cup \{ f \mid f \in E^1 \} \cup \{ f^* \mid f \in E^1 \}, $$ subject to the following relations:
\begin{enumerate}[(i)]
\begin{item}
$v v' = \delta_{v, v'} v$ for all $v, v' \in E^0$,
\end{item}
\begin{item}
$s(f)f=f=fr(f)$ for all $f \in E^1$,
\end{item}
\begin{item}
$r(f)f^* = f^* = f^*s(f)$ for all $f \in E^1$,
\end{item}
\begin{item}
$f^* f' = \delta_{f, f'} r(f)$ for all $f, f' \in E^1$,
\end{item}
\begin{item}
$v = \sum_{f \in s^{-1}(v)} f f^*$ for all $v \in X$.
\end{item}
\end{enumerate}
\label{def:cohn}
We let $R$ commute with the generators.

\smallskip

\noindent
Taking $X=\text{Reg}(E)$, we obtain the \emph{Leavitt path algebra of $E$ over $R$}. In other words, we have that $L_R(E) = C_R^{\text{Reg}(E)}(E)$. 
\end{definition}

Recall that a \emph{path} is a sequence of edges $\alpha=f_1 f_2 \dots f_n$ such that $r(f_i)=s(f_{i+1})$ for $1 \leq i \leq n-1$. The \emph{length} of $\alpha$ is equal to $n$ and we write $\text{len}(\alpha) = n$. We also write $s(\alpha) = s(f_1)$ and $r(\alpha)=r(f_n)$. By convention, a vertex $v \in E^0$ is considered to be a path of length $0$. Moreover, there is an anti-graded involution on $C_R^X(E)$ defined by $f \mapsto f^*$ for every $f \in E^1$ and $v \mapsto v^*=v$ for every $v \in E^0$. This involution extends to paths by putting $\alpha^* = f_n^* f_{n-1}^* \dots f_1^*$. The element $\alpha \in C_R^X(E)$ is called a \emph{real path} and $\alpha^* \in C_R^X(E)$ is called a \emph{ghost path}.  Let $\text{Path}(E)$ be the set of paths in $E$. In particular, $\text{Path}(E)$ includes the vertices of $E$ since they are considered zero length paths. Elements of the form $\alpha \beta^* \in C_R^X(E)$ for $\alpha, \beta \in \text{Path}(E)$ are called \emph{monomials}. It can be shown that any element of $C_R^X(E)$ can be written as a finite sum $\sum r_i \alpha_i \beta_i^*$ where $r_i \in R$ and $\alpha_i, \beta_i \in \text{Path}(E)$. Furthermore, there is a natural $\mathbb{Z}$-grading of relative Cohn path algebras given by, 
\begin{equation}
(C_R^X(E))_i  = \text{Span}_R \{ \alpha \beta^* \mid \alpha, \beta \in \text{Path}(E), \text{len}(\alpha)-\text{len}(\beta) = i \},
\label{eq:1bb}
\end{equation} for every $i \in \mathbb{Z}$. This $\mathbb{Z}$-grading is called the \emph{canonical $\mathbb{Z}$-grading} of $C_R^X(E)$. 

The canonical $\mathbb{Z}$-grading of Leavitt path algebras was studied by Hazrat \cite{hazrat2014leavitt}. Among other results, he proved that if $E$ is a finite graph, then $L_R(E)$ is strongly $\mathbb{Z}$-graded if and only if $E$ has no sinks (see \cite[Thm. 3.15]{hazrat2014leavitt}). Nystedt and Öinert established that $L_R(E)$ is epsilon-strongly $\mathbb{Z}$-graded if $E$ is finite (\cite[Thm. 1.2]{nystedt2017epsilon}) and that $L_R(E)$ is nearly epsilon-strongly $\mathbb{Z}$-graded for any graph $E$ (see \cite[Thm. 1.3]{nystedt2017epsilon}). For more details about Cohn path algebras and Leavitt path algebras, we refer the reader to the monograph by Abrams, Ara and Siles Molina \cite{abrams2017leavitt}.

\smallskip

We now consider graded von Neumann regular Leavitt path algebras. The following example shows that graded von Neumann regularity of $L_R(E)$ is dependent on $R$. Recall that any ring $S$ is trivially $G$-graded by any group $G$ by putting $S_e = S$ and $S_g = \{ 0 \}$ for  every $g \ne e$. 

\begin{example}
Let $R$ be a unital ring and consider the following directed graph:
\begin{displaymath}
A_1 : \qquad
	\xymatrix{
	\bullet_{v} 
	}
\end{displaymath}

\noindent
Note that since $A_1$ does not contain any edges, we have that the canonical $\mathbb{Z}$-grading (cf. (\ref{eq:1bb})) of $L_R(A_1)$ is given by $L_R(A_1)_i =  \{ 0 \}$ for $i \ne 0$ and $L_R(A_1)_0 = \text{Span}_R \{ v \}.$ 
\begin{enumerate}[(a)]
\begin{item}
The $\mathbb{Z}$-graded ring $L_R(A_1)$ is graded isomorphic to the coefficient ring $R$ equipped with the trivial $\mathbb{Z}$-grading via the map defined by $r \mapsto r v$ for every $r \in R$. With this grading, every element is homogeneous.  Hence, $L_R(A_1)$ is graded von Neumann regular if and only if $R$ is von Neumann regular.
\end{item}
\begin{item}
Furthermore, since $A_1$ is a finite graph, it follows by \cite[Thm. 1.2]{nystedt2017epsilon} that $L_R(A_1)$ is epsilon-strongly $\mathbb{Z}$-graded and therefore, in particular, symmetrically $\mathbb{Z}$-graded (see Remark \ref{rem:1}). If $R$ is not von Neumann regular, then $L_R(A_1)$ is not graded von Neumann regular by (a). However, $L_R(A_1)$ is symmetrically $\mathbb{Z}$-graded. This shows that not all symmetrically graded rings are graded von Neumann regular (cf. Corollary \ref{cor:sym}).
\end{item}
\end{enumerate}
\label{ex:1}
\end{example}




Let us now briefly discuss our method for proving Theorem \ref{thm:big}. Let $R$ be a unital ring and let $E$ be a directed graph. By \cite[Thm. 1.3]{nystedt2017epsilon}, $L_R(E)$ is nearly epsilon-strongly $\mathbb{Z}$-graded. It follows from Theorem \ref{thm:char} that $L_R(E)$ is graded von Neumann regular if and only if $(L_R(E))_0$ is von Neumann regular. If $E$ is a finite graph, then we can explicitly describe $(L_R(E))_0$ which allows us to lift von Neumann regularity from $R$ to $(L_R(E))_0$ (see Theorem \ref{thm:dn}). However, if $E$ is not finite, then this approach does not seem to work. 

Instead, the proof of Theorem \ref{thm:big}  proceeds as follows: We first prove the theorem in the special case of finite graphs using Corollary \ref{cor:epsilon} (see Corollary \ref{cor:finite}). Secondly, we reduce the general case to the finite case by writing any Leavitt path algebra as a direct limit of Leavitt path algebras of finite graphs (see Proposition \ref{prop:finite_graphs}). This latter reduction step is similar to the technique used by Hazrat \cite[Steps (II)-(IV)]{hazrat2014leavitt} to establish Theorem \ref{thm:hazrat_lpa}.

%


\subsection{Finite graphs}



Let $R$ be a unital ring and let $E$ be a finite graph. The principal component $(L_R(E))_0$ is a unital subring of $L_R(E)$ with multiplicative identity element $1_{(L_R(E))_0} = \sum_{v \in E^0} v$ (see e.g. \cite[Lem. 1.2.12(iv)]{abrams2017leavitt}). We begin by characterizing when $(L_R(E))_0$ is von Neumann regular. This will follow from a more general structure theorem. For Leavitt path algebras over fields, this result was showed by Ara, Moreno and Pardo (see the proof of \cite[Thm. 5.3]{ara2007nonstable}). However, their proof generalizes to Leavitt path algebras over unital rings in a straightforward manner. Define a filtration of $(L_R(E))_0$ as follows. For $n \geq 0 $, put,
\begin{equation*}
D_n = \text{Span}_R \{ \alpha \beta^* \mid \text{len}(\alpha) = \text{len}(\beta) \leq n \}.
\end{equation*} It is straightforward to show that $D_n$ is an $R$-subalgebra of $(L_R(E))_0$. For $v \in E^0$ and $n > 0$ let $P(n,v)$ denote the set of paths $\gamma$ with $\text{len}(\gamma)=n$ and $r(\gamma)=v$. Let $\text{Sink}(E)$ denote the set of sinks in $E$. Moreover, recall that a matricial ring is a finite product of full matrix rings.

\begin{theorem}(\cite[Cor. 2.1.16]{abrams2017leavitt})
Let $R$ be a unital ring and let $E$ be a finite directed graph. For a non-negative integer $n$, let $M_n(R)$ denote the full $n \times n$-matrix ring. Then, 
\begin{equation*}
(L_R(E))_0 = \bigcup_{n \geq 0} D_n.
\end{equation*}
Moreover, we have that,
\begin{equation*}
D_0  \cong \prod_{v \in E^0} R ,
\label{eq:u5}
\end{equation*} 
\begin{equation*}
D_n \cong \prod_{\substack{0 \leq i \leq n-1 \\ v \in \text{Sink}(E)}} M_{| P(i, v) |}(R) \times \prod_{v \in E^0} M_{ | P(n, v) | }(R),
\label{eq:u6}
\end{equation*}
as $R$-algebras. In particular, $(L_R(E))_0$ is a direct limit of matricial rings over $R$ (as an object in the category of unital rings).
\label{thm:dn}
\end{theorem}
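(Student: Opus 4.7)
The plan is to adapt the proof of \cite[Thm. 5.3]{ara2007nonstable} for Leavitt path algebras over fields to the setting of a general unital coefficient ring $R$. Since $R$ commutes with all generators of $L_R(E)$ and the argument is purely combinatorial, relying only on the Cuntz--Krieger relations in Definition \ref{def:cohn}, the proof transfers essentially verbatim. I would organize the argument into three steps, from which the direct-limit conclusion then follows for free.

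Step (i), $(L_R(E))_0 = \bigcup_n D_n$, is immediate: every element of $(L_R(E))_0$ is a finite $R$-linear combination of monomials $\alpha\beta^*$ with $\text{len}(\alpha) = \text{len}(\beta)$, hence lies in $D_n$ for $n$ equal to the maximum length appearing. Step (ii) uses that the vertices $v \in E^0$ are pairwise orthogonal idempotents (relation (i) of Definition \ref{def:cohn}) commuting with $R$, so $D_0 = \bigoplus_{v \in E^0} Rv \cong \prod_{v \in E^0} R$ as $R$-algebras.

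Step (iii), the matricial decomposition of $D_n$, is the heart of the proof. The key identity is $\beta^*\gamma = \delta_{\beta,\gamma}\, r(\beta)$ whenever $\beta,\gamma$ are paths of the same length with $r(\beta) = r(\gamma)$; this follows by iteratively applying relation (iv) to cancel matching initial edges and collapsing intermediate factors using (ii)--(iii). If the endpoints differ (or the paths differ), the same calculation produces $0$. Consequently, for each vertex $v$ and each $n \geq 0$, the $R$-linear assignment $\alpha\beta^* \mapsto E_{\alpha,\beta}$ for $\alpha,\beta \in P(n,v)$ is an $R$-algebra isomorphism from the $R$-span of these monomials onto $M_{|P(n,v)|}(R)$, while the blocks attached to distinct vertices (or to distinct sink-lengths) annihilate each other. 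To see that these blocks exhaust $D_n$, I would use the Cuntz--Krieger relation (v): for a regular vertex $v$ and a monomial $\alpha\beta^*$ of length $i < n$ with $r(\alpha) = r(\beta) = v$,
\begin{equation*}
\alpha\beta^* = \alpha v \beta^* = \sum_{f \in s^{-1}(v)} (\alpha f)(\beta f)^*,
\end{equation*}
so $\alpha\beta^*$ can be rewritten as a sum of monomials of length $i+1$. Iterating, every monomial in $D_n$ either reaches the top level $n$ (any endpoint) or terminates at some intermediate level $i < n$ at a sink, matching the product on the right-hand side of the claimed isomorphism.

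The main obstacle I anticipate is verifying that the canonical map from the product of matricial blocks into $D_n$ is injective, equivalently, $R$-linear independence of the pushed-up monomials. For fields this is a straightforward dimension count; for a general unital $R$ one argues instead that these monomials form a free $R$-basis of $D_n$, which can be extracted from the standard normal-form basis of $L_R(E)$ whose derivation over an arbitrary unital coefficient ring is combinatorial and parallel to the field case (cf. the treatment in \cite{abrams2017leavitt}). Granted this, the direct-limit statement is automatic: the unital inclusions $D_n \hookrightarrow D_{n+1}$, realized explicitly by the pushing formula above, exhibit $(L_R(E))_0 = \varinjlim_n D_n$ as a direct limit of matricial $R$-algebras in the category of unital rings.
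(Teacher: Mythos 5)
Your proposal follows exactly the route the paper takes: the paper gives no proof of its own, merely citing \cite[Cor. 2.1.16]{abrams2017leavitt} and remarking that the Ara--Moreno--Pardo argument for fields (\cite[Thm. 5.3]{ara2007nonstable}) carries over verbatim to a unital coefficient ring, and your three steps (filtration, matrix units via $\beta^*\gamma = \delta_{\beta,\gamma}\,r(\beta)$, pushing up with relation (v) at regular vertices while sinks terminate) are precisely that argument, with the one genuinely new point --- $R$-linear independence of the monomials in place of a dimension count --- correctly identified and resolved via the normal-form basis. This is a correct and complete rendering of the proof the paper defers to.
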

We can now establish the following lemma:

\begin{lemma}
Let $R$ be a unital ring and let $E$ be a finite directed graph.  If $R$ is von Neumann regular, then $(L_R(E))_0$ is von Neumann regular.
\label{lem:leavitt_principal1}
\end{lemma}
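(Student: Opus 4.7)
The plan is to leverage the structural description of $(L_R(E))_0$ given in Theorem \ref{thm:dn} and reduce the claim to three well-known closure properties of the class of von Neumann regular rings.

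First I would recall the following three classical facts about unital von Neumann regular rings: (i) for any $n \geq 1$, the full matrix ring $M_n(R)$ is von Neumann regular whenever $R$ is (see e.g. \cite[Thm. 1.7]{goodearl1991neumann}); (ii) any finite direct product of von Neumann regular rings is itself von Neumann regular, which is immediate from the componentwise definition; and (iii) a union (equivalently, a directed limit along inclusions) of von Neumann regular subrings is von Neumann regular, since for any element $x$ of the union we may pick a member $D$ of the directed system containing $x$ and produce a pseudo-inverse $y \in D$ with $x = xyx$ inside the ambient ring.

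Next I would apply Theorem \ref{thm:dn} to write
\begin{equation*}
(L_R(E))_0 = \bigcup_{n \geq 0} D_n,
\end{equation*}
where each $D_n$ is $R$-algebra isomorphic to a finite product of full matrix rings over $R$. Since $R$ is von Neumann regular by hypothesis, fact (i) shows that each matrix ring factor is von Neumann regular, and then fact (ii) shows that each $D_n$ is von Neumann regular. Finally, since the filtration $D_0 \subseteq D_1 \subseteq D_2 \subseteq \cdots$ consists of subrings of $(L_R(E))_0$, fact (iii) yields that $(L_R(E))_0$ is von Neumann regular.

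This argument is essentially routine once Theorem \ref{thm:dn} is in hand; there is no real obstacle. The only minor point worth being careful about is that the chain $\{D_n\}$ needs to consist of subrings (not merely abelian subgroups), so that a pseudo-inverse found within some $D_n$ continues to witness regularity in the union; but this is guaranteed by the fact that each $D_n$ is an $R$-subalgebra of $(L_R(E))_0$, as explicitly stated in Theorem \ref{thm:dn}.
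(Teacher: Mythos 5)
Your proof is correct and takes essentially the same route as the paper's: both invoke Theorem \ref{thm:dn} to realize $(L_R(E))_0$ as the directed union $\bigcup_{n \geq 0} D_n$ of matricial rings over $R$ and then apply the closure of von Neumann regularity under matrix rings, finite products, and direct limits. Your write-up merely makes the three closure facts (and the need for the $D_n$ to be subrings) more explicit than the paper's terse citation of \cite[Prop. 5.2.14]{berrick2000categories}.
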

\begin{proof}



The principal component $(L_R(E))_0$ is the direct limit of matricial rings over $R$ by Theorem \ref{thm:dn}. A full matrix ring over $R$ is von Neumann regular if and only if $R$ is von Neumann regular. Moreover, recall that unital von Neumann regular rings are closed under direct limits (see \cite[Prop. 5.2.14]{berrick2000categories}). It follows that $(L_R(E))_0$ is von Neumann regular if $R$ von Neumann is regular. 
\end{proof}
For the converse statement, we do not need the assumption that $E$ is a finite graph, but $E$ cannot be the null-graph (see Remark \ref{ex:2}).

\begin{lemma}
Let $R$ be a unital ring and let $E \ne \emptyset$ be a directed graph. If $(L_R(E))_0$ is von Neumann regular, then $R$ is von Neumann regular.
\label{lem:leavitt_principal2}
\end{lemma}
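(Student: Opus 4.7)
The plan is to fix any vertex $v \in E^0$ (possible since $E\neq\emptyset$) and, for an arbitrary $r\in R$, produce $s\in R$ with $r=rsr$ by applying the von Neumann regularity hypothesis to $rv\in(L_R(E))_0$ and extracting the coefficient of $v$ in the resulting identity.

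By the hypothesis there exists $y \in (L_R(E))_0$ with $rv = rv\cdot y\cdot rv$. Multiplying both sides by $v$ and using $v^2=v$, I may replace $y$ by $vyv$, so I will assume $y\in v(L_R(E))_0 v$.

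The key computation is that for any monomial $\mu = \alpha\beta^*$ with $s(\alpha)=s(\beta)=v$ (which forces $v\mu=\mu=\mu v$) and any $c\in R$, the fact that $R$ commutes with each of the generators $v,f,f^*$ of $L_R(E)$ yields
\begin{equation*}
rv\cdot(c\mu)\cdot rv \;=\; (rcr)\,\mu.
\end{equation*}
Expanding $y = \sum_\mu c_\mu\mu$ over a monomial $R$-basis of $v(L_R(E))_0 v$ (which exists since $L_R(E)$ is a free $R$-module admitting an explicit monomial basis; see for instance the monograph by Abrams, Ara and Siles Molina on Leavitt path algebras) and summing yields
\begin{equation*}
rv \;=\; \sum_\mu (rc_\mu r)\,\mu.
\end{equation*}

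I would then choose the basis so that the length-$0$ monomial $v$ is itself a basis element; this is possible, since in the case of a regular $v$ one can invoke Leavitt's relation~(v) to drop a single $ff^*$ with $f\in s^{-1}(v)$ in order to preserve linear independence. In this basis $rv$ has coefficient $r$ on $v$, while the right-hand side has coefficient $rc_v r$ on $v$, and matching forces $r = rc_v r$. Since $r\in R$ was arbitrary, $R$ is von Neumann regular.

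The main obstacle is pinning down an explicit monomial $R$-basis of $v(L_R(E))_0 v$ in which $v$ appears as a basis element, so that the coefficient comparison above is rigorous. My plan is to cite such a basis from the existing Leavitt path algebra literature rather than construct it from scratch.
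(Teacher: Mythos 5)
Your proposal is correct in outline and starts exactly where the paper does: fix a vertex $v$ (possible since $E\ne\emptyset$), embed $R$ into $(L_R(E))_0$ via $r\mapsto rv$, apply the regularity hypothesis to $rv$ to get $rv=rv\,y\,rv$, and use the fact that $R$ commutes with the generators to rewrite the right-hand side as $\sum_\mu (rc_\mu r)\mu$. Where you genuinely diverge is in how a coefficient identity is extracted from this equation, which is the whole technical content of the lemma, since the monomials $\alpha\beta^*$ are not $R$-linearly independent (relation (v) relates them). You resolve this by invoking a full monomial $R$-basis of $L_R(E)$ containing the vertices --- the ``special edge'' basis obtained by discarding one product $ff^*$ at each regular range vertex. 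Such a basis does exist and does contain $v$, so your coefficient comparison $r=rc_vr$ goes through; but the reference you gesture at (the Abrams--Ara--Siles Molina monograph) establishes this basis only for field coefficients, so you would still need to locate or reprove the version over an arbitrary, possibly noncommutative, unital ring. The paper sidesteps the basis theorem entirely: after writing $tv_0=\sum_j tr_jt\,\alpha_j\beta_j^*$, it left-multiplies by $\alpha_m^*$ for a path $\alpha_m$ of maximal length among the $\alpha_j$, which by the standard formula for $\delta^*\mu$ collapses every surviving term to a pure ghost path $(\beta_k')^*$; the conclusion then requires only the $R$-linear independence of ghost paths (\cite[Prop.~4.9]{tomforde2011leavitt}), a strictly weaker and more readily citable input. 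In short, your route buys a one-line coefficient comparison at the price of a stronger structural fact about $L_R(E)$; the paper's maximal-length trick buys independence from the basis theorem at the price of a slightly more delicate computation. To make your version airtight, either supply the basis argument over general unital $R$ or adapt your extraction step to use only ghost-path independence as the paper does.
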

\begin{proof}
Suppose that $(L_R(E))_0$ is von Neumann  regular and fix an arbitrary vertex $v_0 \in E^0$, whose existence is guaranteed by the assumption that $E \ne \emptyset$. Note that $R \xhookrightarrow{} (L_R(E))_0$ via the map $r \mapsto r v_0$. Take an arbitrary element $0 \ne t \in R$. By the assumption there is some $x \in (L_R(E))_0$ such that $t v_0 = (t v_0) x (t v_0)$. Let $x = \sum_i r_i \alpha_i \beta_i^*$ for some $\alpha_i, \beta_i \in \text{Path}(E)$ satisfying $\text{len}(\alpha_i)=\text{len}(\beta_i)$, $r(\alpha_i)=r(\beta_i)$ and $r_i \in R$ for each index $i$. Then,
\begin{align}
t v_0 & = (  t v_0  ) \Big (\sum_i r_i \alpha_i \beta_i^*\Big ) ( t v_0 ) = \sum_j t r_j  t \alpha_j \beta_j^* ,
\label{eq:1}
\end{align}
where the sum goes over all indices $j$ such that $s(\alpha_j)=s(\beta_j)=v_0$. Consider the finite set $M = \{ \alpha_j \}$ of $\alpha_j$'s appearing in right hand sum of (\ref{eq:1}). Let $\alpha_m$ be a fixed path of maximal length appearing in $M$.  Multiplying both sides of (\ref{eq:1}) with $\alpha_m^*$ from the left yields, 
\begin{equation}
\alpha_m^* ( t v_0 ) = \alpha_m^* \Big ( \sum_j t r_j  t \alpha_j \beta_j^*  \Big ) = \sum_j t r_j t \alpha_m^* \alpha_j \beta_j^* = \sum_j t r_j t (\alpha_m^* \alpha_j) \beta_j^*.
\label{eq:3}
\end{equation}
Since $s(\alpha_m)=v_0$, we have that $r(\alpha_m^*)=v_0$ and hence $\alpha_m^* (t v_0) = t \alpha_m^* \ne 0$. Recall (see \cite[Lem. 1.2.12(i)]{abrams2017leavitt}) that for any paths $\delta, \mu \in \text{Path}(E)$ we have that,
\begin{equation}
\delta^* \mu = \begin{cases}
\kappa & \text{ if } \mu = \delta \kappa \text{ for some } \kappa \\
\sigma^* & \text{ if } \delta = \mu \sigma \text{ for some } \sigma \\
0 & \text{otherwise.}
\end{cases}
\label{eq:2}
\end{equation}
Using (\ref{eq:2}) and the assumption that $\alpha_m$ is of maximal length it follows that if $\alpha_m = \alpha_j \alpha_j'$ then $\alpha_m^* \alpha_j = (\alpha_j')^*$. Otherwise, i.e. if $\alpha_j$ is not an initial segment of $\alpha_m$, we have that $\alpha_m^* \alpha_j = 0$. Hence, (\ref{eq:3}) simplifies to,
\begin{equation*}
0 \ne t \alpha_m^* = \sum_k t r_k t (\beta_k')^*,
\end{equation*}
for some paths $\beta_k'$. Since ghost paths are $R$-linearly independent (see \cite[Prop. 4.9]{tomforde2011leavitt}), it follows that $t \alpha_m^* = t r_k t \alpha_m^*$ for some index $k$. Hence, $t = t r_k t$ and thus $R$ is von Neumann regular.
\end{proof}

Hazrat proved that if $K$ is a field and $E$ a finite directed graph, then $L_K(E)$ is a graded von Neumann regular ring (see \cite[pg. 5]{hazrat2014leavitt}). His proof is based on a reduction to corner skew Laurent polynomial rings. Here, we obtain a generalization of his result:

\begin{corollary}
Let $R$ be a unital ring and let $E \ne \emptyset$ be a finite directed graph. Then $L_R(E)$ is graded von Neumann regular if and only if $R$ is von Neumann regular.
\label{cor:finite}
\end{corollary}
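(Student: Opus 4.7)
The plan is to assemble this corollary directly from the machinery already developed in the excerpt, using the finiteness of $E$ to invoke the epsilon-strong grading result of Nystedt and Öinert. The two directions are essentially independent, so I would treat them separately.

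For the forward implication, I would assume $R$ is von Neumann regular and argue as follows. First, since $E$ is finite, \cite[Thm. 1.2]{nystedt2017epsilon} guarantees that $L_R(E)$ is epsilon-strongly $\mathbb{Z}$-graded. Next, Lemma \ref{lem:leavitt_principal1} (which requires exactly the hypothesis that $E$ is finite) yields that the principal component $(L_R(E))_0$ is von Neumann regular. These two facts together let me apply Corollary \ref{cor:epsilon} to conclude that $L_R(E)$ is graded von Neumann regular.

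For the reverse implication, I would assume that $L_R(E)$ is graded von Neumann regular. Lemma \ref{lem:1} immediately gives that the principal component $(L_R(E))_0$ is von Neumann regular. Then, invoking Lemma \ref{lem:leavitt_principal2} (which is where the hypothesis $E \ne \emptyset$ is used, since the argument needs a vertex $v_0 \in E^0$ to embed $R$ into $(L_R(E))_0$), one concludes that $R$ is von Neumann regular.

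There is no real obstacle here, since all the heavy lifting has been carried out in Theorem \ref{thm:char}, Corollary \ref{cor:epsilon}, and Lemmas \ref{lem:leavitt_principal1} and \ref{lem:leavitt_principal2}. The only point that deserves emphasis in the write-up is the role of the two hypotheses on the graph: finiteness is needed for the forward direction (both to apply \cite[Thm. 1.2]{nystedt2017epsilon} and to use Lemma \ref{lem:leavitt_principal1}), while $E \ne \emptyset$ is needed for the reverse direction to have a vertex through which $R$ embeds into $(L_R(E))_0$. With those two observations in place the proof becomes a short chain of implications.
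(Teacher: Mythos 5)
Your proposal is correct and follows essentially the same route as the paper: invoke \cite[Thm. 1.2]{nystedt2017epsilon} to get the epsilon-strong $\mathbb{Z}$-grading, reduce to the principal component via Corollary \ref{cor:epsilon}, and transfer von Neumann regularity between $(L_R(E))_0$ and $R$ using Lemmas \ref{lem:leavitt_principal1} and \ref{lem:leavitt_principal2}. The only cosmetic difference is that you cite Lemma \ref{lem:1} for the reverse direction where the paper simply uses the ``only if'' half of Corollary \ref{cor:epsilon}; your remarks on where finiteness and $E \ne \emptyset$ are each needed are accurate.
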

\begin{proof}
By \cite[Thm. 1.2]{nystedt2017epsilon}, $L_R(E)$ is epsilon-strongly $\mathbb{Z}$-graded. It follows from Corollary \ref{cor:epsilon} that $L_R(E)$ is graded von Neumann regular if and only if $(L_R(E))_0$ is von Neumann regular. By Lemma \ref{lem:leavitt_principal1} and Lemma \ref{lem:leavitt_principal2}, $(L_R(E))_0$ is  von Neumann regular if and only if $R$ is von Neumann regular. The statement now follows. 
\end{proof}

\begin{remark}
The null-graph is a degenerate case that needs to be excluded in Corollary \ref{cor:finite}. Let $R$ be a unital ring and let $\emptyset$ be the null-graph, i.e. the graph without any vertices or edges. In this case, $L_R(\emptyset)$ is the zero ring which is trivially von Neumann regular and hence also graded von Neumann regular. In other words, the Leavitt path algebra $L_R(\emptyset)$ is graded von Neumann regular for any unital ring $R$. 
\label{ex:2}
\end{remark}

For the rest of this section, we will reduce the general case of Theorem \ref{thm:big} to the finite case dealt with in Corollary \ref{cor:finite}. For Leavitt path algebras over a field, it is known that any Leavitt path algebra is the direct limit of Leavitt path algebras associated to finite graphs (see \cite[Cor. 1.6.11]{abrams2017leavitt}). We will show that this property generalizes to Leavitt path algebras over general unital rings. 

\subsection{Cohn path algebras as Leavitt path algebras}

A surprising but well-known result is that any relative Cohn path algebra with coefficients in a field is graded isomorphic to a Leavitt path algebra over the same field. We recall the construction from \cite[Def. 1.5.16]{abrams2017leavitt}. Consider a pair $(E,X)$ where $X \subseteq \text{Reg}(E)$. Define a new graph $E(X)$ in the following way. Let $Y := \text{Reg}(E) \setminus X$ and add new vertices $Y' = \{ v' \mid v \in Y \}$. The new graph $E(X)$ is given by:
\begin{equation*}
(E(X))^0 = E^0 \sqcup Y' \qquad \text{ and } \qquad (E(X))^1 = E^1 \sqcup \{ e' \mid r(e) \in Y \},
\end{equation*}
where $e'$ is a new edge going from $s(e)$ to the new vertex $r(e)'$.

\begin{proposition}(cf. \cite[Thm. 1.5.18]{abrams2017leavitt}) Let $R$ be a unital ring and let $E$ be a directed graph. Let $E(X)$ denote the directed graph defined above. Then, $$ C_R^X(E) \cong_{\text{gr}} L_R(E(X)).$$
\label{prop:cpa_as_lpa}
\end{proposition}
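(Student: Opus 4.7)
The plan is to construct an explicit graded ring isomorphism. I would work with two candidate maps, one in each direction, defined on generators, verify that each respects the defining relations, and then check they are mutually inverse and degree-preserving.

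First I would define a graded ring homomorphism $\psi \colon C_R^X(E) \to L_R(E(X))$ on generators by $v \mapsto v$ for $v \in E^0 \setminus Y$, $v \mapsto v + v'$ for $v \in Y$, $e \mapsto e$ if $r(e) \notin Y$, $e \mapsto e + e'$ if $r(e) \in Y$, and adjointly on ghost edges. Relations (i)--(iv) are straightforward: $v$ and $v'$ are orthogonal idempotents in $L_R(E(X))$ and the generators $e$ and $e'$ are distinct edges in $E(X)^1$, so CK1 takes care of the cross terms. The heart of the verification is relation (v): for $v \in X$ one must check $\psi(v) = \sum_{e\in s_E^{-1}(v)} \psi(e)\psi(e^*)$. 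Expanding the right-hand side produces, besides $ee^*$ and $e'(e')^*$, the cross terms $e(e')^*$ and $e'e^*$ for each $e$ with $r(e) \in Y$. These vanish because $\alpha\beta^* = 0$ whenever $r(\alpha) \neq r(\beta)$, and here $r(e) \in E^0$ while $r(e') \in Y'$. What remains is exactly the CK2 sum for $v$ in $L_R(E(X))$, which equals $v$ since $v \in \mathrm{Reg}(E) \subseteq \mathrm{Reg}(E(X))$.

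Next I would construct the inverse $\phi \colon L_R(E(X)) \to C_R^X(E)$. Let $q_v := v - \sum_{f \in s_E^{-1}(v)} ff^* \in C_R^X(E)$ for $v \in Y$. A short calculation using $f^*w = \delta_{s(f),w}f^*$ shows that $q_v$ is idempotent, that $v - q_v$ is orthogonal to $q_v$ with sum $v$, and that $q_v f = 0 = f^* q_v$ for every $f \in E^1$. I would then define $v \mapsto v$ for $v \in E^0 \setminus Y$, $v \mapsto v - q_v$ for $v \in Y$, $v' \mapsto q_v$ for $v \in Y$, $e \mapsto e$ for $r(e) \notin Y$, $e \mapsto e(r(e) - q_{r(e)})$ for $r(e) \in Y$, $e' \mapsto eq_{r(e)}$, with adjoint prescriptions for ghost edges. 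The vertex orthogonality and edge source/range relations follow from the above identities, CK1 follows using $q_v^2 = q_v$ and $q_v(r(e)-q_{r(e)}) = 0$, and CK2 at a regular vertex $v \in \mathrm{Reg}(E(X)) = \mathrm{Reg}(E)$ telescopes using $e(r(e)-q_{r(e)})e^* + eq_{r(e)}e^* = ee^*$ together with the Cohn relation at $X$ or the definition of $q_v$ at $Y$.

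Finally, I would verify $\phi \circ \psi = \mathrm{id}$ and $\psi \circ \phi = \mathrm{id}$ on generators, which amounts to the decompositions $v = (v-q_v) + q_v$ and $e = e(r(e)-q_{r(e)}) + eq_{r(e)}$ in $C_R^X(E)$. Since every homogeneous component of $\phi$ and $\psi$ has the correct degree (e.g. $q_v$ lives in degree $0$, $eq_{r(e)}$ in degree $1$), both maps respect the canonical $\mathbb{Z}$-grading, giving $C_R^X(E) \cong_{\mathrm{gr}} L_R(E(X))$.

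The main obstacle I anticipate is the verification of the Cohn relation (v) under $\psi$: one must identify precisely why the cross terms $e(e')^*$ and $e'e^*$ vanish, since a priori they look like legitimate elements of $L_R(E(X))_0$. The argument rests on the elementary but crucial observation that $\alpha\beta^* = 0$ when $r(\alpha)$ and $r(\beta)$ lie in disjoint subsets of $E(X)^0$; once this is in place the rest of the verification is routine algebra, and the proof works verbatim for a general unital coefficient ring $R$ since $R$ only acts on the combinatorial skeleton as scalars.
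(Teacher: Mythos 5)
Your proposal is correct and follows essentially the same route as the paper: the paper defines the very same map $v \mapsto v + v'$, $f \mapsto f + f'$ and defers the verification to \cite[Thm. 1.5.18]{abrams2017leavitt}, whose argument is precisely the one you write out (including the inverse built from the idempotents $q_v = v - \sum_{f \in s^{-1}(v)} ff^*$). Your explicit checks of relations (i)--(v), the vanishing of the cross terms $e(e')^*$, and the gradedness are all sound, so you have simply made explicit what the paper cites.
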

\begin{proof}
\vspace{-1em}
Let $Y=\text{Reg}(E) \setminus X$. Define a map $\phi \colon C_R^X(E) \to L_R(E(X))$. For $v \in E^0$, let $\phi(v) = v +v'$ if $v \in Y$ and $\phi(v)=v$ otherwise. For $f \in E^1$, let $\phi(f)=f + f'$ if $r(f) \in Y$ and $\phi(f)=f$ otherwise. Moreover, let $\phi(f^*)=\phi(f)^*$ for all $f \in E^1$. Using the same arguments as in \cite[Thm. 1.5.18]{abrams2017leavitt}, it follows that $\phi$ is a well-defined ring isomorphism. Furthermore, it is clear from the definition that $\phi$ is $\mathbb{Z}$-graded. 
\end{proof}

%

%
%
%

\subsection{The Cohn path algebra functor}
Let $E$ and $F$ be directed graphs. A \emph{graph homomorphism} $\phi \colon E \to F$ is a pair of maps $(\phi^0 \colon E^0 \to F^0, \phi^1 \colon E^1 \to F^1)$ satisfying the conditions $s(\phi^1(f)) = \phi^0(s(f))$ and $r(\phi^1(f)) = \phi^0(r(f))$ for every $f \in E^1$.  
Recall (see \cite[Def. 1.6.2]{abrams2017leavitt}) that the category $\mathscr{G}$ consists of objects of the form $(E,X)$ where $E$ is a directed graph and $X \subseteq \text{Reg}(E)$ is a subset of regular vertices. If $(F,Y), (E,X) \in \text{Ob}(\mathscr{G})$, then $\psi = (\psi^0, \psi^1)$ is a morphism in $\mathscr{G}$ if the following conditions are satisfied:
\begin{enumerate}[(a)]
\begin{item}
$\psi \colon F \to E$ is a graph homomorphism such that $\psi^0$ and $\psi^1$ are injective;
\end{item}
\begin{item}
$\psi^0(Y) \subseteq X$;
\end{item}
\begin{item}
For every $v \in Y$, the restriction $\psi^1 \colon s^{-1}_F(v) \to s^{-1}_E(\psi^0(v))$ is a bijection.
\end{item}
\end{enumerate}

The category $\mathscr{G}$ has arbitrary direct limits (see \cite[Prop. 1.6.4]{abrams2017leavitt}).  We will define a functor $C_R$ from $\mathscr{G}$ to $\mathbb{Z} \mhyphen \textrm{RING}$ for any unital ring $R$.


\begin{lemma}(cf. \cite[Lem. 1.6.3]{abrams2017leavitt})
Let $\psi \colon (F,Y) \to (E,X)$ be a morphism in $\mathscr{G}$. Then there is an induced $\mathbb{Z}$-graded ring homomorphism $\bar \psi \colon C_R^Y(F) \to C_R^X(E)$.
\end{lemma}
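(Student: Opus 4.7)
The plan is to construct $\bar\psi$ by specifying it on the generators of $C_R^Y(F)$, verifying that it respects the defining relations of Definition \ref{def:cohn}, and then checking that the resulting ring homomorphism preserves the canonical $\mathbb{Z}$-grading. Explicitly, I would define $\bar\psi$ on generators by $v \mapsto \psi^0(v)$ for $v \in F^0$, $f \mapsto \psi^1(f)$ for $f \in F^1$, and $f^* \mapsto \psi^1(f)^*$, and let $\bar\psi$ commute with $R$. By the universal property of the free $R$-algebra on the generating set, this extends uniquely to an $R$-algebra homomorphism from the free $R$-algebra; what must be checked is that the relations (i)--(v) defining $C_R^Y(F)$ are mapped to relations holding in $C_R^X(E)$.

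I would then go through the five relations in order. Relation (i), $vv' = \delta_{v,v'}v$, survives because $\psi^0$ is injective (condition (a)), so $\psi^0(v)\psi^0(v') = \delta_{\psi^0(v),\psi^0(v')}\psi^0(v) = \delta_{v,v'}\psi^0(v)$. Relations (ii) and (iii), $s(f)f = f = fr(f)$ and its ghost version, follow from $\psi$ being a graph homomorphism, since $s_E(\psi^1(f)) = \psi^0(s_F(f))$ and similarly for the range map. Relation (iv), $f^*f' = \delta_{f,f'}r(f)$, again uses injectivity of $\psi^1$ together with the previous equalities on sources and ranges.

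The only subtle step is relation (v), the Cuntz--Krieger relation $v = \sum_{f \in s_F^{-1}(v)} ff^*$ for $v \in Y$. Here all three parts of the morphism definition come into play: condition (b) guarantees that $\psi^0(v) \in X$, so that the corresponding relation $\psi^0(v) = \sum_{h \in s_E^{-1}(\psi^0(v))} hh^*$ actually holds in $C_R^X(E)$; and condition (c) guarantees that $\psi^1$ restricts to a bijection $s_F^{-1}(v) \to s_E^{-1}(\psi^0(v))$, so that the image of the left-hand sum $\sum_{f \in s_F^{-1}(v)} \psi^1(f)\psi^1(f)^*$ equals the right-hand sum indexed by $s_E^{-1}(\psi^0(v))$. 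This is the step I expect to be the main obstacle, in the sense that it is the only place where the full strength of the definition of a morphism in $\mathscr{G}$ is used.

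Finally, to see that $\bar\psi$ is $\mathbb{Z}$-graded, recall from \eqref{eq:1bb} that the canonical $\mathbb{Z}$-grading assigns degree $\mathrm{len}(\alpha)-\mathrm{len}(\beta)$ to a monomial $\alpha\beta^*$. Under $\bar\psi$, a path $\alpha = f_1 \cdots f_n$ in $F$ is sent to $\psi^1(f_1) \cdots \psi^1(f_n)$, which is a path in $E$ of the same length because $\psi$ is a graph homomorphism (so consecutive edges remain composable) and $\psi^1$ sends edges to edges. Consequently $\bar\psi(\alpha\beta^*)$ is homogeneous of the same degree as $\alpha\beta^*$, so $\bar\psi\bigl((C_R^Y(F))_i\bigr) \subseteq (C_R^X(E))_i$ for every $i \in \mathbb{Z}$, completing the proof.
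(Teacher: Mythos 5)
Your proposal is correct and follows essentially the same route as the paper's proof: define $\bar\psi$ on generators via $\psi^0$ and $\psi^1$, verify relations (i)--(v) using injectivity for (i) and (iv), the graph homomorphism property for (ii)--(iii), and the bijection from condition (c) for (v), then observe that monomial degrees are preserved. In fact you are slightly more explicit than the paper in noting that condition (b) is what guarantees the Cuntz--Krieger relation at $\psi^0(v)$ actually holds in $C_R^X(E)$.
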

\begin{proof}
Put $\bar \psi(v) = \psi^0(v)$, $\bar \psi(f) = \psi^1(f)$ and $\bar \psi(f^*) = \psi^1(f)^*$ for all $v \in F^0, f \in F^1$. We show that $\bar \psi$ respects the relations (i)-(v) in Definition \ref{def:cohn}. 

(i): Take arbitrary vertices $u,v \in F^0$ such that $u \neq v$. Then, since $\psi^0$ is injective by (a), it follows that $\bar \psi(u) \bar \psi(v) = \psi^0 (u) \psi^0 (v) = 0.$ Furthermore, $\bar \psi(u) \bar \psi(u) = \psi^0 (u) \psi^0 (u) = \psi^0(u) = \bar \psi(u)$. This shows that $\bar \psi$ preserves (i). 

(ii)-(iii): The assumption in (a) that $\psi$ is a graph homomorphism implies that (ii) and (iii) are preserved. 

(iv): Follows by injectivity of $\psi^1$ similarly to (i).

(v): Let $v \in F^0$ with $s^{-1}_F(v) \ne \emptyset$. By (c), $\psi^1$ maps $s^{-1}_F(v)$ bijectively onto $s^{-1}_E(\psi^0(v))$. Hence, $$\bar \psi(v) = \psi^0(v) = \sum_{f \in s^{-1}_E(\psi^0(v))} f f^* = \sum_{f \in s^{-1}_F(v)} \psi^1(f) \psi^1(f)^* = \sum_{f \in s^{-1}_F(v)} \bar \psi(f) \bar \psi(f).$$

\noindent
Thus, $\bar \psi$ extends to a well-defined ring homomorphism $C_R^Y(F) \to C_R^X(E)$. Furthermore, it follows directly from the definition that $\bar \psi$ is $\mathbb{Z}$-graded.
\end{proof}

The following functor has previously only been considered in the case of coefficients in a field. But in fact, the properties we need also hold true for arbitrary coefficient rings.

\begin{definition}
Let $R$ be a unital ring. Define the \emph{Cohn path algebra functor} by,
\begin{align*}
C_R \colon \mathscr{G} &\to \mathbb{Z} \mhyphen \textrm{RING}, \\
(E, X) &\mapsto C_R^X(E) \\
\psi &\mapsto \bar \psi,
\end{align*}
for all objects $(E,X) \in \text{Ob}(\mathscr{G})$ and morphisms $\psi$ in $\mathscr{G}$. 
\end{definition}

\begin{lemma}
(cf. \cite[Prop. 1.6.4]{abrams2017leavitt}) The functor $C_R$ preserves direct limits.
\label{lem:cont}
\end{lemma}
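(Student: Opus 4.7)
The plan is to verify the lemma by exhibiting an explicit inverse to the canonical comparison map between the two candidate direct limits. Let $\{(E_i, X_i)\}_{i \in I}$ be a directed system in $\mathscr{G}$ with connecting morphisms $\psi_{ji}\colon (E_i, X_i) \to (E_j, X_j)$, and let $(E,X) = \varinjlim_i (E_i, X_i)$ with structural morphisms $\psi_i \colon (E_i, X_i) \to (E,X)$. I will use the explicit description of direct limits in $\mathscr{G}$ from \cite[Prop. 1.6.4]{abrams2017leavitt}: $E^0 = \varinjlim_i E_i^0$, $E^1 = \varinjlim_i E_i^1$, and $X = \bigcup_i \psi_i^0(X_i)$. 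The induced graded ring homomorphisms $\bar\psi_i \colon C_R^{X_i}(E_i) \to C_R^X(E)$ are compatible with the $\bar\psi_{ji}$, so by the universal property of direct limits in $\mathbb{Z}\mhyphen\textrm{RING}$ they yield a graded ring homomorphism
\begin{equation*}
\Phi \colon \varinjlim_i C_R^{X_i}(E_i) \longrightarrow C_R^X(E).
\end{equation*}

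The surjectivity of $\Phi$ is immediate: every generator $v$, $f$ or $f^*$ of $C_R^X(E)$ is, by construction of the direct limit in $\mathscr{G}$, the image under some $\psi_i$ of a generator of $C_R^{X_i}(E_i)$. For injectivity, I would construct a two-sided inverse $\Psi$ by invoking the universal property of the relative Cohn path algebra $C_R^X(E)$ as a quotient of a free $R$-algebra. On generators, define $\Psi(v)$, $\Psi(f)$, $\Psi(f^*)$ by picking any index $i$ for which $v$, respectively $f$, comes from $E_i$, and taking the class in $\varinjlim_i C_R^{X_i}(E_i)$ of the corresponding generator; well-definedness follows from the directedness of $I$ and the fact that all transition maps are graded ring homomorphisms. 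The routine part is then checking that $\Psi$ respects relations (i)--(iv) of Definition \ref{def:cohn}, which follows from the injectivity of $\psi^0$ and $\psi^1$ and the fact that $\psi$ is a graph homomorphism.

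The main obstacle, and the only step that uses the categorical structure of $\mathscr{G}$ in an essential way, is verifying that $\Psi$ respects the Cuntz--Krieger relation (v). Given $v \in X$, one chooses $i$ so that $v = \psi_i^0(v_i)$ for some $v_i \in X_i$; then condition (c) on morphisms of $\mathscr{G}$ forces $\psi_i^1$ to restrict to a bijection $s_{E_i}^{-1}(v_i) \to s_E^{-1}(v)$. Consequently the sum $\sum_{f \in s^{-1}(v)} ff^*$ in $C_R^X(E)$ matches, term by term, the image under the canonical map of the sum $\sum_{g \in s_{E_i}^{-1}(v_i)} gg^*$ in $C_R^{X_i}(E_i)$, which equals $v_i$ by relation (v) in $C_R^{X_i}(E_i)$ since $v_i \in X_i$. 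Hence the relation is preserved. The verification that $\Psi$ is graded is trivial on generators, and checking $\Phi \circ \Psi$ and $\Psi \circ \Phi$ are identities on a generating set completes the argument.
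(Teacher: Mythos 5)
Your proposal is correct and is essentially the paper's argument in different packaging: the paper verifies the universal property of $\varinjlim_i C_R^{X_i}(E_i)$ directly by defining a map $\gamma$ on generators of $C_R^X(E)$ via lifting to a finite stage and checking relations (i)--(v), whereas your $\Psi$ is exactly that map specialized to the test object $A=\varinjlim_i C_R^{X_i}(E_i)$, after which you check it inverts the comparison map $\Phi$. The essential content --- well-definedness via directedness of $I$, and the use of condition (c) on morphisms of $\mathscr{G}$ to preserve the relation (v) --- is identical in both.
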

\begin{proof}
Let $((E_i, X_i), (\phi_{ji})_{i,j \in I, j\geq i})$ be a directed system in $\mathscr{G}$ with direct limit $((E,X), \psi_i)$. We  show that $(C_R^X(E), \bar \psi_i)$ is the direct limit of the directed system $(C_R^{X_i}(E_i), \bar \phi_{ji})$. 
Let $A$ be a $\mathbb{Z}$-graded ring and $\gamma_i \colon C_R^{X_i}(E_i) \to A$ be a family of compatible morphisms.  We need to show that there is a $\mathbb{Z}$-graded ring homomorphism $\gamma \colon C_R^X(E) \to A$ making the following diagram commute: 
\begin{displaymath}
	\xymatrix@C+1em@R+1em{ 
	C_R^{X_i}(E_i) \ar^{\bar \phi_{ij}}[rr] \ar^{\bar \psi_i}[dr] \ar@/_1em/_{\gamma_i}[ddr] & & C_R^{X_j}(E_j) \ar_{\bar \psi_j}[dl] \ar@/^1em/^{\gamma_j}[ddl] \\
	& C_R^X(E) \ar@{-->}^\gamma[d] & \\
	& A &
	}
\end{displaymath}
Define $\gamma \colon C_R^X(E) \to A$ by, $$ \gamma(\psi_i^s(\alpha)) = \gamma_i(\alpha), \quad \gamma(\psi_i^s(\alpha)^*) = \gamma_i(\alpha^*), $$
for all $\alpha \in E_i^s,\, i \in I, $ and $ s \in \{ 0, 1 \}.$ It remains to show that this gives a well-defined $\mathbb{Z}$-graded ring homomorphism. We show that $\gamma$ preserves the relations (i)-(v) in Definition \ref{def:cohn}. 

(i): Let $u \in E^0$. Then there is some $i \in I$ and $u_0 \in E_i^0$ satisfying $\psi_i(u_0)=u$. Hence, by definition, $$\gamma(u)^2 = \gamma(\psi_i^0(u_0))^2 = \gamma_i(u_0)^2 = \gamma_i(u_0^2) = \gamma_i(u_0) = \gamma(u).$$ Let $u \neq v \in E^0$ and take some $i \in I$ such that $u_0, v_0 \in E_i^0$ and $\psi_i^0(u_0)=u, \psi_i^0(v_0) = v$. Since $u_0 \neq v_0$, it follows that $\gamma(u) \gamma(v) = \gamma_i(u_0) \gamma_i(v_0) = \gamma_i(u_0 v_0) = \gamma_i(0) = 0.$

(ii): Let $f \in E^1$. Then there is some $i \in I$ such that there are $f_0 \in E_i^1$ and $v_0 \in E_i^0$ satisfying $\psi_i^1(f_0)=f$ and $\psi_i^0(v_0) = s(f)$. By assumption (a), $\psi_i$ is an injective graph homomorphism, which implies that $v_0 = s(f_0)$.  Then, 
\begin{align*}
\gamma(s(f)) \gamma(f) &= \gamma(\psi_i^0(v_0)) \gamma(\psi_i^1(f_0)) = \gamma_i(v_0) \gamma_i(f_0) = \gamma_i(v_0 f_0) = \gamma_i(s(f_0) f_0) = \\& = \gamma_i(f_0) = \gamma(\psi_i^1(f_0)) = \gamma(f).
\end{align*}

(iii): Analogous to (ii).

(iv): Let $f \in E^1$ and take $i \in I$ such that there is some $f_0 \in E_i^1$ and $v_0 \in E_i^0$ satisfying $\psi_i^1(f_0)=f$ and $\psi_i^0(v_0) = r(f)$. By assumption (a), $\psi_i$ is an injective graph homomorphism, which implies that $v_0 = r(f_0)$. Then, 
\begin{align*}
\gamma(f^*) \gamma(f) & = \gamma(\psi_i^1(f_0^*)) \gamma(\psi_i^1(f_0)) = \gamma_i(f_0^*) \gamma_i (f_0) = \gamma_i(f_0^* f_0) = \gamma_i(r(f_0) = \\ &= \gamma_i(v_0) = \gamma(\psi_i^0(v_0)) = \gamma(v).
\end{align*}

(v): Let $v \in E^0$ and take $i \in I$ such that there is some $v_0 \in E_i^0$ satisfying $\psi_i^0(v_0) = v$. Since, $\psi_i^1$ maps $s_{E_i}^{-1}(v_0)$ bijectively onto $s_{E}^{-1}(\psi_i^0(v_0))=s_E^{-1}(v)$, it follows that, 
\begin{align*}
\gamma(v) &= \gamma(\psi_i^0(v_0)) = \gamma_i(v_0) = \sum_{f \in s_{E_i}^{-1}(v_0)} \gamma_i(f) \gamma_i(f^*)  = \sum_{f \in s_{E_i}^{-1}(v_0)}  \gamma(\psi_i^1(f)) \gamma(\psi_i^1(f^*)) =\\ & = \sum_{f' \in S_E^{-1}(v)} \gamma(f') \gamma((f')^*).
\end{align*}
Thus, $\gamma$ is a well-defined ring homomorphism. Moreover, it follows directly from the definition that it is $\mathbb{Z}$-graded. Since $\gamma_i$ and $\gamma \circ \bar \psi_i$ agree on the generators $E_i^0 \cup E_i^1  \cup (E_i^1)^*$  of $C_R^X(E)$, it follows that $\gamma_i = \gamma \circ \bar \psi_i$. Hence, the diagram commutes and we are done.
\end{proof}

\begin{proposition}(cf. \cite[Cor. 1.6.11]{abrams2017leavitt})
Let $R$ be a unital ring and let $E$ be a graph. Then there exists a directed system $\{ (F_i, Y_i) \mid i \in I \}$ in $\mathscr{G}$ such that the following assertions hold:
\begin{enumerate}[(a)]
\begin{item}
every $F_i$ is a finite directed graph;
\end{item}
\begin{item}
$L_R(E) \cong_{\text{gr}} \varinjlim_{i} C_R^{Y_i}(F_i) \cong_{\text{gr}} \varinjlim_{i} L_R(F_i(Y_i)).$ 
\end{item}
\end{enumerate}
In other words, $L_R(E)$ is graded isomorphic to the direct limit of Leavitt path algebras associated to the finite graphs $F_i(Y_i)$. 
\label{prop:finite_graphs}
\end{proposition}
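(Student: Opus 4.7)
My plan is to mimic the standard argument \cite[Cor.~1.6.11]{abrams2017leavitt} for coefficients in a field, using the functoriality of $C_R$ together with Lemma \ref{lem:cont} and Proposition \ref{prop:cpa_as_lpa}. Let $I$ be the set of pairs $(V,W)$ with $V\subseteq E^0$ and $W\subseteq E^1$ both finite, and such that $s(f),r(f)\in V$ for every $f\in W$. Order $I$ by componentwise inclusion; this makes $I$ a directed set since the union of two such pairs again belongs to $I$. For $i=(V,W)\in I$ let $F_i$ be the finite graph with vertex set $V$ and edge set $W$, and set
\[
Y_i \;:=\; \{\, v\in V \mid v\in\operatorname{Reg}(E) \text{ and } s_E^{-1}(v)\subseteq W\,\}.
\]
Note that if $v\in Y_i$, then $s_{F_i}^{-1}(v)=s_E^{-1}(v)$, which is finite and non-empty, so $Y_i\subseteq \operatorname{Reg}(F_i)$ and therefore $(F_i,Y_i)\in\operatorname{Ob}(\mathscr{G})$.

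Next, I would check that whenever $i\le j$ in $I$, the inclusion $F_i\hookrightarrow F_j$ defines a morphism $\phi_{ji}\colon(F_i,Y_i)\to(F_j,Y_j)$ in $\mathscr{G}$, and that the inclusion $F_i\hookrightarrow E$ defines a morphism $\psi_i\colon(F_i,Y_i)\to(E,\operatorname{Reg}(E))$ in $\mathscr{G}$. Conditions (a) and (b) in the definition of $\mathscr{G}$ are immediate from the construction. For condition (c), the crucial observation is that the defining property of $Y_i$ forces $s_{F_i}^{-1}(v)=s_E^{-1}(v)$ for every $v\in Y_i$, and since $W\subseteq W_j$ and $W\subseteq E^1$, the restrictions are bijections onto the corresponding fibres in $F_j$ and $E$. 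I would then show that $((E,\operatorname{Reg}(E)),\psi_i)$ is the direct limit of the system $((F_i,Y_i),\phi_{ji})$ in $\mathscr{G}$: every vertex of $E$ lies in some $F_i$, every edge lies in some $F_i$, and every $v\in\operatorname{Reg}(E)$ lies in $Y_{(V,W)}$ as soon as $V\ni v$ and $W\supseteq s_E^{-1}(v)$ (such $W$ exists precisely because $v$ is regular). The universal property is then verified componentwise on vertices and edges in a routine manner.

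Finally, I would invoke Lemma \ref{lem:cont} to conclude
\[
L_R(E) \;=\; C_R^{\operatorname{Reg}(E)}(E) \;\cong_{\mathrm{gr}}\; \varinjlim_{i\in I} C_R^{Y_i}(F_i),
\]
which establishes assertion (a) and the first graded isomorphism in (b). For the second graded isomorphism, Proposition \ref{prop:cpa_as_lpa} supplies a graded isomorphism $C_R^{Y_i}(F_i)\cong_{\mathrm{gr}} L_R(F_i(Y_i))$ for each $i$, and I would verify that the construction $(F,Y)\mapsto F(Y)$ is functorial on morphisms of $\mathscr{G}$ (by sending a new edge $e'$ to the corresponding new edge in the target, or to $0$ if $r(e)\notin Y$) and that $\phi$ from the proof of Proposition \ref{prop:cpa_as_lpa} is natural in $(F,Y)$. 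Passing to the direct limit then yields $\varinjlim_{i} C_R^{Y_i}(F_i)\cong_{\mathrm{gr}}\varinjlim_{i} L_R(F_i(Y_i))$.

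The main obstacle is the verification of condition (c) for the transition maps $\phi_{ji}$ and $\psi_i$. This is exactly what forces the rather technical definition of $Y_i$: one cannot simply take $Y_i=\operatorname{Reg}(F_i)\cap X$ because a vertex may be regular in $F_i$ while losing edges present in $E\setminus W$, which would violate the bijection requirement. Once this point is handled correctly, the universal property and the functoriality statements are routine.
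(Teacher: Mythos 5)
Your argument is essentially the paper's: the paper simply cites \cite[Lem. 1.6.9]{abrams2017leavitt} for the fact that $(E,\text{Reg}(E))$ is a direct limit in $\mathscr{G}$ of finite objects --- your explicit system of pairs $(V,W)$ with the induced $Y_i$ is precisely the construction behind that lemma --- and then applies Lemma \ref{lem:cont} and Proposition \ref{prop:cpa_as_lpa} exactly as you do. The only detail to adjust is your final naturality step: the assignment $(F,Y)\mapsto F(Y)$ is not functorial on $\mathscr{G}$ (a vertex in $\text{Reg}(F_i)\setminus Y_i$ may lie in $Y_j$, so the new edge $e'$ has no target and ``sending $e'$ to $0$'' is not a graph homomorphism), but this step is unnecessary, since one can simply transport the transition maps along the graded isomorphisms of Proposition \ref{prop:cpa_as_lpa}, which is all that the conclusion $\varinjlim_i C_R^{Y_i}(F_i)\cong_{\text{gr}}\varinjlim_i L_R(F_i(Y_i))$ requires and is what the paper implicitly does.
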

\begin{proof}
By \cite[Lem. 1.6.9]{abrams2017leavitt}, we have that $(E, \text{Reg}(E))$ is the direct limit of some directed system $\{ (F_i, Y_i) \mid i \in I \}$ where every $F_i$ is a finite graph. Since the functor $C_R$ preserves direct limits by Lemma \ref{lem:cont}, we have that $L_R(E) \cong_{\text{gr}} \varinjlim_{i} C_R^{Y_i}(F_i)$. By Proposition \ref{prop:cpa_as_lpa}, it follows that $C_R^{Y_i}(F_i) \cong_{\text{gr}}L_R(F_i(Y_i))$ for each $i \in I$. Hence, $\varinjlim_{i} C_R^{Y_i}(F_i) \cong_{\text{gr}} \varinjlim_{i} L_R(F_i(Y_i))$.
\end{proof}

The following proposition establishes the difficult direction of Theorem \ref{thm:big}:

\begin{proposition}
Let $R$ be a unital ring and let $E$ be a directed graph. If $R$ is von Neumann regular, then $L_R(E)$ is graded von Neumann regular.
\label{prop:lift_general}
\end{proposition}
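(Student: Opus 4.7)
The plan is to combine the finite-graph case (already handled by Corollary \ref{cor:finite}) with the direct limit technology developed in Section \ref{sub:limits} and the graph-theoretic direct limit decomposition from Proposition \ref{prop:finite_graphs}. Essentially, the hard analytic work has already been extracted into the preceding results; all that remains is an assembly step.

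First, I would dispose of the trivial case: if $E$ is the null-graph, then by Remark \ref{ex:2}, $L_R(E)$ is the zero ring, which is (graded) von Neumann regular for vacuous reasons. So I may henceforth assume $E \neq \emptyset$.

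Next, I would invoke Proposition \ref{prop:finite_graphs} to obtain a directed system $\{(F_i, Y_i) \mid i \in I\}$ in $\mathscr{G}$ with each $F_i$ finite, together with a graded isomorphism
\[
L_R(E) \cong_{\mathrm{gr}} \varinjlim_{i \in I} L_R(F_i(Y_i)).
\]
Each graph $F_i(Y_i)$ is finite, since it is obtained from $F_i$ by adjoining at most one new vertex and one new edge for each regular vertex of $F_i$ not in $Y_i$. For every index $i$, Corollary \ref{cor:finite} (together with the trivial observation from Remark \ref{ex:2} covering the possibility that $F_i(Y_i)$ happens to be the null-graph) implies that $L_R(F_i(Y_i))$ is graded von Neumann regular, because $R$ is von Neumann regular by hypothesis.

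Finally, Lemma \ref{lem:direct_lim} tells us that the class of graded von Neumann regular rings is closed under direct limits in $G \mhyphen \mathrm{RING}$. Applying it to the directed system $\{L_R(F_i(Y_i))\}_{i \in I}$ shows that $\varinjlim_i L_R(F_i(Y_i))$ is graded von Neumann regular, and since graded von Neumann regularity is a graded-isomorphism invariant, so is $L_R(E)$. There is no genuine obstacle here: the only thing to be careful about is that the direct limit appearing in Proposition \ref{prop:finite_graphs} is taken in the category $\mathbb{Z} \mhyphen \mathrm{RING}$ (as guaranteed by Lemma \ref{lem:cont}), so that Lemma \ref{lem:direct_lim} applies verbatim.
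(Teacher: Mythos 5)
Your proposal is correct and follows essentially the same route as the paper's own proof: dispose of the null-graph via Remark \ref{ex:2}, apply Corollary \ref{cor:finite} to the finite graphs arising from Proposition \ref{prop:finite_graphs}, and conclude with Lemma \ref{lem:direct_lim}. Your extra remark that some $F_i(Y_i)$ could be the null-graph (not covered by Corollary \ref{cor:finite}, which assumes $E \ne \emptyset$) is a small point of care the paper glosses over.
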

\begin{proof}
If $E=\emptyset$ is the null-graph, then $L_R(E)$ is trivially graded von Neumann regular (see Remark \ref{ex:2}). Next, suppose that $E \neq \emptyset$. Since $R$ is von Neumann regular, Corollary \ref{cor:finite} implies that $L_R(F)$ is graded von Neumann regular for any finite  graph $F$. By Proposition \ref{prop:finite_graphs}, $L_R(E)$ is graded isomorphic to a direct limit of Leavitt path algebras associated to finite graphs. Thus, by Lemma \ref{lem:direct_lim}, we have that $L_R(E)$ is graded von Neumann regular. 
\end{proof}
We are now ready to give a complete proof of Theorem \ref{thm:big}.
\begin{proof}[Proof of Theorem \ref{thm:big}]
Let $E \ne \emptyset$ be a directed graph. First suppose that $R$ is von Neumann regular. Then Proposition \ref{prop:lift_general} implies that $L_R(E)$ is graded  von Neumann regular. 

Conversely, suppose that $L_R(E)$ is graded von Neumann regular. Then, by Lemma \ref{lem:1}, it follows that $(L_R(E))_0$ is von Neumann regular. Moreover, by Lemma \ref{lem:leavitt_principal2}, this implies that $R$ is  von Neumann regular. 
\end{proof}

\begin{remark}
Let $E$ be an arbitrary graph. Since $\mathbb{C}$ is von Neumann regular it follows from Theorem \ref{thm:big} that $L_\mathbb{C}(E)$ is graded von Neumann regular. On the other hand, since $\mathbb{Z}$ is not von Neumann regular, we conclude that $L_\mathbb{Z}(E)$ is not graded von Neumann regular. Hence, Theorem \ref{thm:big}, in particular, algebraically differentiate Leavitt path algebras with coefficients in $\mathbb{C}$ respectively $\mathbb{Z}$. The author feels that this differentiaton is especially interesting considering the strange behaviour of Leavitt path algebras with coefficients in $\mathbb{Z}$ observed by Johansen and S{\o}rensen \cite{johansen2016cuntz}.
\label{rem:3}
\end{remark}

\section{Semiprime and semiprimitive Leavitt path algebras} 
\label{sec:semiprime}
In this section, we apply our results to obtain sufficient conditions for a Leavitt path algebra over a unital ring to be semiprimitive and semiprime. 

Abrams and Aranda Pino showed that if $K$ is a field and $E$ is a graph, then the Leavitt path algebra $L_K(E)$ is both semiprime and semiprimitive (see \cite[Prop. 6.1-6.3]{abrams2008leavitt}). However, Leavitt path algebras over non-field rings are not always semiprime nor semiprimitive. Indeed, for the graph $A_1$ in Example \ref{ex:1}, we have that $L_R(A_1) \cong R$ for any unital ring $R$. Hence, $L_R(A_1)$ is semiprime/semiprimitive if and only if $R$ is semiprime/semiprimitive. 

Let $R$ be a ring. Recall that an ideal $I$ of $R$ is called \emph{semiprime} if $aRa \subseteq I$ implies that $a \in I$ for every $a \in R$. A ring is called \emph{semiprime} if its zero ideal is semiprime. Moreover, recall that $R$ is called \emph{semiprimitive} if the Jacobson radical $J(R)=0$. Let $S$ be a $G$-graded ring and recall that $S$ is said to have \emph{homogeneous local units} if there is a set of local units $E$ for $S$ consisting of homogeneous idempotents. The following lemma by Abrams and Arando Pino generalizes Bergman's famous result that if $S$ is a unital $\mathbb{Z}$-graded ring, then $J(S)$ is a graded ideal of $S$ (see \cite[Cor. A.I.7.15]{nastasescu1982graded}):

\begin{lemma}
(\cite[Lem. 6.2]{abrams2008leavitt}) Let $S$ be a $\mathbb{Z}$-graded ring. Suppose that $S$ has a set of homogeneous local units. Then $J(S)$ is a graded ideal of $S$. 
\label{lem:bergman1}
\end{lemma}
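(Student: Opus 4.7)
The plan is to reduce the statement to Bergman's classical result that the Jacobson radical of a unital $\mathbb{Z}$-graded ring is a graded ideal (see e.g. \cite[Cor. A.I.7.15]{nastasescu1982graded}) by passing to the corner rings associated with the homogeneous local units. The first observation I would record is that in any $G$-graded ring, a non-zero homogeneous idempotent must lie in the neutral component: if $e = e^2 \in S_g$, then $e \in S_g \cap S_{g^2}$, which forces $g^2 = g$. In particular, any set $E$ of homogeneous local units of the $\mathbb{Z}$-graded ring $S$ is contained in $S_0$.

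Next, I would fix an arbitrary $e \in E$ and consider the corner ring $eSe$. Since $e \in S_0$, the decomposition $(eSe)_n := e S_n e$ turns $eSe$ into a unital $\mathbb{Z}$-graded ring with identity element $e$. By Bergman's theorem, $J(eSe)$ is therefore a graded ideal of $eSe$.

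I would then invoke the standard identity $J(eSe) = e J(S) e$ valid for idempotents (and thus for local units, after extracting $e$ with $a = eae$); in particular $J(eSe) \subseteq J(S)$. Given an arbitrary $a \in J(S)$, I pick $e \in E$ with $a = eae$ and write the homogeneous decomposition $a = \sum_{n \in \mathbb{Z}} a_n$ with $a_n \in S_n$. Because $e \in S_0$, the element $eae = \sum_n e a_n e$ has $e a_n e \in (eSe)_n$, and uniqueness of the homogeneous decomposition forces $a_n = e a_n e$ for every $n$. Hence $a$ lies in $eSe$ and its decomposition in $S$ coincides with its decomposition in $eSe$. Since $a \in e J(S) e = J(eSe)$ and $J(eSe)$ is graded, every component $a_n$ belongs to $J(eSe) \subseteq J(S)$. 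This shows $J(S) = \bigoplus_{n \in \mathbb{Z}} (J(S) \cap S_n)$, which is precisely the condition that $J(S)$ is a graded ideal.

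The main obstacle is a clean verification of the corner ring identity $J(eSe) = e J(S) e$ in the non-unital setting, but this is standard for rings with local units, so no new work is required beyond citing it. The structural work is entirely done by Bergman's theorem applied componentwise to the corners $eSe$.
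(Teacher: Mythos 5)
Your argument is correct, and it is essentially the proof of the result in the cited source \cite[Lem.\ 6.2]{abrams2008leavitt} (the paper itself gives no proof, only the citation): homogeneous idempotents land in $S_0$, each corner $eSe$ is a unital $\mathbb{Z}$-graded ring to which Bergman's theorem applies, and the identity $J(eSe)=eJ(S)e$ transports gradedness of the radical back to $S$. The only point you rightly flag --- that $J(eSe)=eJ(S)e$ holds for an idempotent in a not necessarily unital ring --- is indeed standard and needs no further work here.
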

\begin{remark}
Let $S$ be a unital $\mathbb{Z}$-graded ring. Then $E= \{ 1_S \}$ is a set of local units for $S$. Moreover, note that $1_S \in S_e$ is a homogeneous element. Hence, it follows from Lemma \ref{lem:bergman1} that $J(S)$ is a graded ideal of $S$. 
\label{rem:2}
\end{remark}

We have Leavitt path algebras in mind when we state the following result, but we will also need it in the next section.

\begin{proposition}(cf. \cite[Prop. 2(4)]{hazrat2014leavitt}, \cite[Prop. 6.3]{abrams2008leavitt})
Let $S$ be a $\mathbb{Z}$-graded ring. Suppose that $S$ is graded von Neumann regular and that $S$ has a set of homogeneous local units. Then $S$ is semiprimitive and semiprime.
\label{prop:semi1}
\end{proposition}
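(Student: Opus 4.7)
The plan is to first establish that $S$ is semiprimitive, and then deduce semiprimeness as a consequence.

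For semiprimitivity, the goal is to show $J(S) = 0$. Lemma \ref{lem:bergman1} guarantees that $J(S)$ is a graded ideal, so $J(S) = \bigoplus_{g \in \mathbb{Z}} (J(S) \cap S_g)$, and it is enough to prove $J(S) \cap S_g = \{0\}$ for every $g \in \mathbb{Z}$. Fix $a \in J(S) \cap S_g$. By Proposition \ref{prop:ny}, there exists $b \in S_{g^{-1}}$ with $a = aba$. Setting $p := ab$, a direct computation gives $p^2 = abab = (aba)b = ab = p$, so $p$ is an idempotent, and $p$ lies in $J(S)$ since $a$ does. The crux is the standard fact that the Jacobson radical of any ring, unital or not, contains no nonzero idempotent: if $p = p^2 \in J(S)$ and $y$ is a quasi-inverse of $p$, so that $p + y - py = 0$, then left-multiplying by $p$ yields $p + py - py = 0$, i.e., $p = 0$. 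Hence $ab = 0$, which forces $a = aba = (ab)a = 0$. Therefore $J(S) = 0$.

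For semiprimeness, observe that $S$ is s-unital because it has (homogeneous) local units. In any s-unital ring, semiprimeness is equivalent to the absence of nonzero nilpotent two-sided ideals: given $aSa = 0$, one has $(SaS)^2 \subseteq S(aSa)S = 0$, so $SaS$ is nilpotent, and conversely any nilpotent ideal $I$ produces $aSa \subseteq I^2$ for $a \in I$; in the forward direction, $SaS = 0$ combined with s-unitality yields $a \in SaS = 0$. Moreover, every nil ideal of any ring is contained in $J(S)$: for $a \in I$ with $a^n = 0$ and any $s \in S$, the element $as$ is nilpotent with explicit quasi-inverse $-as - (as)^2 - \cdots - (as)^{n-1}$, and likewise for $sa$, whence $a \in J(S)$. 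Since every nilpotent ideal is in particular nil, the vanishing $J(S) = 0$ forces $S$ to have no nonzero nilpotent ideals, so $S$ is semiprime.

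The main obstacle is the semiprimitivity step, and the key is the interplay between Lemma \ref{lem:bergman1} (gradedness of $J(S)$) and Proposition \ref{prop:ny} (graded von Neumann regularity in its sharpened form with $b \in S_{g^{-1}}$): together they manufacture, out of any nonzero homogeneous element of $J(S)$, a nonzero idempotent in $J(S)$, contradicting the standard absence of such idempotents in the Jacobson radical.
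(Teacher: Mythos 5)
Your proof is correct, but it takes a noticeably different route from the paper's on both halves. For semiprimitivity, the paper passes through Proposition \ref{prop:hazrat_char}: it takes a homogeneous $x \in J(S)$, notes that the graded left ideal $Sx \subseteq J(S)$ is generated by an idempotent $f$, kills $f$ because $J(S)$ contains no nonzero idempotents, and then needs the local units again to get $x \in Sx = 0$. You instead manufacture the idempotent directly as $p = ab$ from the sharpened form of graded regularity (Proposition \ref{prop:ny}) and conclude $a = (ab)a = 0$ with no further appeal to local units; this is more elementary, bypasses Proposition \ref{prop:hazrat_char} entirely, and confines the use of the homogeneous local units to the single place they are genuinely needed, namely Lemma \ref{lem:bergman1} (gradedness of $J(S)$). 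For semiprimeness, the paper simply cites \cite[Prop. 2(2)]{hazrat2014leavitt} (graded ideals of graded von Neumann regular rings are semiprime), whereas you give a self-contained derivation from $J(S)=0$: nilpotent ideals are nil, nil ideals lie in the Jacobson radical via the explicit quasi-inverse $-x - x^2 - \cdots - x^{m-1}$, and s-unitality converts $SaS = 0$ into $a = 0$. The trade-off is that your argument is longer but essentially from first principles, while the paper's is shorter at the cost of importing two external results; your version also makes transparent that semiprimeness here is really a corollary of semiprimitivity plus s-unitality rather than an independent consequence of graded regularity. One cosmetic remark: when you say $as$ is nilpotent for $a \in I$ nil, the nilpotency index of $as$ need not equal that of $a$, so the exponent in your quasi-inverse should be the index of $as$ itself --- this does not affect the argument.
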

\begin{proof}
By Lemma \ref{lem:bergman1}, $J(S)$ is a graded ideal. Let $x \in J(S)$ be a homogeneous element and consider the graded left ideal $Sx \subseteq J(S)$. It follows from Proposition \ref{prop:hazrat_char} that there is some idempotent $f$ such that $S x = S f$. Recall that the Jacobian radical does not contain any non-zero idempotents. But $f=f^2 \in S f = S x \subseteq J(S)$, which implies that $f = 0$. Since $S$ has  a set of local units, it follows that $x \in Sx = Sf = 0$ and hence $x=0$. Thus, $J(S)=0$ and hence $S$ is semiprimitive.

By \cite[Prop. 2(2)]{hazrat2014leavitt}, every graded ideal of a non-unital graded von Neumann regular ring is semiprime. Thus, the zero ideal of $S$ is semiprime and hence $S$ is semiprime.
\end{proof}


Finally, we prove that Leavitt path algebras with coefficients in a von Neumann regular ring are semiprimitive and semiprime.

\begin{corollary}
Let $R$ be a unital ring and let $E$ be a directed graph. If $R$ is von Neumann regular, then $L_R(E)$ is semiprimitive and semiprime.
\label{cor:semiprimitive}
\end{corollary}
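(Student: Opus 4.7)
The plan is to derive the corollary as a direct consequence of Theorem \ref{thm:big} and Proposition \ref{prop:semi1}. Since $R$ is von Neumann regular, Theorem \ref{thm:big} immediately yields that $L_R(E)$ is graded von Neumann regular (in the degenerate case $E = \emptyset$, $L_R(E)$ is the zero ring and the conclusion is trivial, so I assume $E \ne \emptyset$). To apply Proposition \ref{prop:semi1}, it remains only to verify that $L_R(E)$ admits a set of homogeneous local units.

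For this, I would observe that by relation (i) of Definition \ref{def:cohn} the vertices $E^0$ consist of pairwise orthogonal commuting idempotents, and by (\ref{eq:1bb}) each vertex lies in $(L_R(E))_0$, hence is homogeneous of degree $0$. For any finite subset $V \subseteq E^0$, the sum $e_V := \sum_{v \in V} v$ is a homogeneous idempotent. Relations (ii)--(iii) imply that every monomial $\alpha \beta^*$ satisfies $\alpha \beta^* = s(\alpha)\,\alpha \beta^*\,s(\beta)$; since an arbitrary element of $L_R(E)$ is a finite $R$-linear combination of monomials, it is absorbed on both sides by $e_V$ for any sufficiently large finite $V \subseteq E^0$. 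Therefore $\{ e_V \mid V \subseteq E^0 \text{ finite} \}$ is a set of homogeneous local units for $L_R(E)$.

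With these two ingredients in place, I would invoke Proposition \ref{prop:semi1} to conclude that $L_R(E)$ is both semiprimitive and semiprime. There is essentially no obstacle here beyond bookkeeping; the substantive work has already been carried out in Theorem \ref{thm:big} (which in turn relies on the main characterization Theorem \ref{thm:char}) and in Proposition \ref{prop:semi1} (which packages the local-units version of Bergman's theorem together with Hazrat's description of graded ideals in graded von Neumann regular rings).
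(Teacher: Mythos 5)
Your proposal is correct and follows essentially the same route as the paper: apply Theorem \ref{thm:big} to get graded von Neumann regularity and then invoke Proposition \ref{prop:semi1} after exhibiting homogeneous local units. Your verification of the local units is in fact slightly more careful than the paper's (which simply asserts that the set of vertices is a set of local units, whereas one really needs the finite sums $e_V = \sum_{v \in V} v$ to absorb elements supported on several vertices), but the substance is identical.
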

\begin{proof}
Note that $E = \{ v \mid v \in E^0 \}$ is a set of local units for $L_R(E)$ consisting of homogeneous elements. Suppose that $R$ is von Neumann regular. Then, by Theorem \ref{thm:big}, $L_R(E)$ is graded von Neumann regular. It follows from Proposition \ref{prop:semi1} that $L_R(E)$ is semiprimitive and semiprime. 
\end{proof}

\begin{remark}
Since a field is von Neumann regular, it follows that Corollary \ref{cor:semiprimitive} generalizes Abrams and Aranda Pino's result that Leavitt path algebras over fields are semiprimitive and semiprime (see \cite[Prop. 6.1-6.3]{abrams2008leavitt}).
\end{remark}

\section{More applications}
\label{sec:applications2}
In this last section, we apply our results to unital partial crossed products and corner skew Laurent polynomial rings.
Partial crossed products were introduced as a generalization of the classical crossed products (see \cite{dokuchaev2008crossed}). Among these, the \emph{unital partial crossed products} were shown to be especially well-behaved (see e.g. \cite{bagio2010crossed}). Let $R$ be a unital ring and let $G$ be a group with neutral element $e$. A \emph{unital twisted partial action of $G$ on $R$} (see \cite[pg. 2]{nystedt2016epsilon}) is a triple 
$( \{\alpha_g \}_{g \in G}, \{ D_g \}_{g \in G}, \{ w_{g,h} \}_{(g,h) \in G \times G})$
satisfying certain technical relations. To this triple, it is possible to associate an epsilon-strongly $G$-graded algebra $R \star_{\alpha}^\omega G$ called the \emph{unital partial crossed product}. The following result shows that unital partial crossed products behave similarly to classical crossed products with regards to graded von Neumann  regularity.

\begin{corollary}
Let $G$ be a group, let $R$ be a unital ring and let $R \star_{\alpha}^\omega G$ be a unital partial crossed product. Then the unital partial crossed product $R \star_{\alpha}^\omega G$ is graded von Neumann regular if and only if $R$ is von Neumann regular. 
\label{cor:partial_crossed_products}
\end{corollary}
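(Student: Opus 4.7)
The plan is to invoke Corollary \ref{cor:epsilon}. As noted in the paragraph preceding the statement, the unital partial crossed product $R \star_{\alpha}^\omega G$ associated to a unital twisted partial action $(\{\alpha_g\}_{g \in G}, \{D_g\}_{g \in G}, \{w_{g,h}\}_{(g,h) \in G \times G})$ is, by construction, an epsilon-strongly $G$-graded ring. Hence, by Corollary \ref{cor:epsilon}, it suffices to show that the principal component $(R \star_{\alpha}^\omega G)_e$ is von Neumann regular if and only if $R$ is von Neumann regular.

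To this end, I would recall the explicit form of the $G$-grading on the partial crossed product: the homogeneous component of degree $g \in G$ is $D_g \delta_g$, where $D_g$ is an ideal of $R$ and $\delta_g$ is a formal symbol. The axioms of a unital twisted partial action force $\alpha_e = \mathrm{id}_R$ and $D_e = R$, so that the principal component is $(R \star_{\alpha}^\omega G)_e = R \delta_e$, which is isomorphic as a ring to $R$ via $r \delta_e \mapsto r$. Combining this identification with Corollary \ref{cor:epsilon} yields the desired equivalence.

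The proof is essentially immediate once the epsilon-strong grading and the identification of the principal component with $R$ are in place; there is no substantial obstacle. The entire conceptual content of the corollary is supplied by Theorem \ref{thm:char} (through its specialization Corollary \ref{cor:epsilon}), which reduces graded von Neumann regularity for any epsilon-strongly $G$-graded ring to ordinary von Neumann regularity of its principal component.
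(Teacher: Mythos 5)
Your proposal is correct and follows exactly the same route as the paper: cite that $R \star_{\alpha}^\omega G$ is epsilon-strongly $G$-graded with principal component (isomorphic to) $R$, then apply Corollary \ref{cor:epsilon}. The extra detail you give identifying $(R \star_{\alpha}^\omega G)_e = R\delta_e \cong R$ via $D_e = R$ is a harmless elaboration of what the paper simply asserts.
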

\begin{proof}
The ring $R \star_{\alpha}^\omega G$ is epsilon-strongly $G$-graded (see \cite[pg. 2]{nystedt2016epsilon}) with principal component $R$. The statement now follows from Corollary \ref{cor:epsilon}.
\end{proof}

The general construction of fractional skew monoid rings was introduced by Ara, Gonzalez-Barroso, Goodearl and Pardo in \cite{ara2004fractional}. We consider the special case of a fractional skew monoid ring by a corner isomorphism which is also called a \emph{corner skew Laurent polynomial ring}. Let $R$ be a unital ring and let $\alpha \colon R \to eRe$ be a corner ring isomorphism where $e$ is an idempotent of $R$. The corner skew Laurent polynomial ring, denoted by $R[t_{+}, t_{-}; \alpha]$, is a unital epsilon-strongly $\mathbb{Z}$-graded ring (see \cite[Prop. 8.1]{lannstrom2019graded}). 

The following result was proved by Hazrat using direct methods. We recover it as a special case of Corollary \ref{cor:epsilon}.

\begin{corollary}[cf. {\cite[Prop. 8]{hazrat2014leavitt}}]
Let $R$ be a unital ring, let $e$ be an idempotent of $R$ and let $\phi \colon R \to e Re$ be a corner isomorphism. The corner skew Laurent polynomial ring $R[t_{+}, t_{-}, \phi]$ is graded von Neumann regular if and only if $R$ is a von Neumann regular ring.
\label{cor:corner}
\end{corollary}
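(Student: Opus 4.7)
The plan is to deduce this corollary immediately from Corollary \ref{cor:epsilon} applied to the $\mathbb{Z}$-graded ring $R[t_{+}, t_{-}; \phi]$. The key observation is that, by \cite[Prop. 8.1]{lannstrom2019graded}, the corner skew Laurent polynomial ring $R[t_{+}, t_{-}; \phi]$ carries a natural unital epsilon-strong $\mathbb{Z}$-grading, and from the construction of this grading its principal (degree zero) component coincides with $R$ itself.

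First I would invoke \cite[Prop. 8.1]{lannstrom2019graded} to record that $S := R[t_{+}, t_{-}; \phi]$ is epsilon-strongly $\mathbb{Z}$-graded, and identify $S_0 = R$ from the explicit description of the homogeneous components used there. Next, I would directly apply Corollary \ref{cor:epsilon} with $G = \mathbb{Z}$: since $S$ is epsilon-strongly $\mathbb{Z}$-graded, $S$ is graded von Neumann regular if and only if $S_0$ is von Neumann regular. Substituting $S_0 = R$, this is precisely the claim.

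I do not expect any substantive obstacle in this argument; everything reduces to plugging the structural input (epsilon-strong grading with principal component $R$) into the already-established characterization. The only thing to be careful about is to make sure the reader sees that the degree zero part of $R[t_{+}, t_{-}; \phi]$ is $R$, which is immediate from the standard definition of a corner skew Laurent polynomial ring but worth stating explicitly so that the application of Corollary \ref{cor:epsilon} is transparent.
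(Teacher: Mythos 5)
Your proposal is correct and matches the paper's own proof exactly: both cite \cite[Prop.~8.1]{lannstrom2019graded} to obtain that $R[t_{+}, t_{-}; \phi]$ is epsilon-strongly $\mathbb{Z}$-graded with principal component $R$, and then apply Corollary \ref{cor:epsilon}. No gaps.
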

\begin{proof}
By \cite[Prop. 8.1]{lannstrom2019graded}, $R[t_{+}, t_{-}, \phi]$ is epsilon-strongly $\mathbb{Z}$-graded with principal component $R$. The desired conclusion now follows from Corollary \ref{cor:epsilon}. 
\end{proof}

We end this article by given sufficient conditions for a corner skew Laurent polynomial ring to be semiprimitive and semiprime.

\begin{corollary}
Let $R$ be a unital ring, let $e$ be an idempotent of $R$ and let $\phi \colon R \to e Re$ be a corner isomorphism. If $R$ is a von Neumann regular ring, then the corner skew Laurent polynomial ring $R[t_{+}, t_{-}, \phi]$ is semiprimitive and semiprime.
\label{cor:semi2}
\end{corollary}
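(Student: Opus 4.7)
The proof will be a short two-step application of results already established in the paper. First I would invoke Corollary \ref{cor:corner}: since $R$ is von Neumann regular, the corner skew Laurent polynomial ring $R[t_+, t_-, \phi]$ is graded von Neumann regular with respect to its canonical $\mathbb{Z}$-grading. Second, I would apply Proposition \ref{prop:semi1}, which states that any $\mathbb{Z}$-graded ring that is graded von Neumann regular and admits a set of homogeneous local units is both semiprimitive and semiprime.

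To make the second step go through, I need to verify the hypothesis on homogeneous local units. But $R[t_+, t_-, \phi]$ is a \emph{unital} epsilon-strongly $\mathbb{Z}$-graded ring (by \cite[Prop. 8.1]{lannstrom2019graded}), so its multiplicative identity $1$ lies in the principal component and is therefore a homogeneous idempotent. Thus $\{1\}$ is a (trivial) set of homogeneous local units, exactly as noted in Remark \ref{rem:2}. This verifies the hypothesis of Proposition \ref{prop:semi1}.

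There is no real obstacle here: the work has already been done in Corollary \ref{cor:corner} and Proposition \ref{prop:semi1}, and the corollary follows by combining them. The only thing to note is that, in contrast to the Leavitt path algebra case (Corollary \ref{cor:semiprimitive}), one does not need to construct a nontrivial family of local units; the unital hypothesis built into the definition of the corner skew Laurent polynomial ring makes this immediate.
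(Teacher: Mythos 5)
Your proposal is correct and matches the paper's own proof essentially step for step: both invoke Corollary \ref{cor:corner} to obtain graded von Neumann regularity and then apply Proposition \ref{prop:semi1}, using the fact that $\{1\}$ is a set of homogeneous local units since the identity of a unital graded ring lies in the principal component (Remark \ref{rem:2}). Nothing is missing.
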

\begin{proof}
Since $R[t_{+}, t_{-}, \phi]$ is a unital $\mathbb{Z}$-graded ring, it follows that $E = \{ 1_R \}$ is a set of local units (see Remark \ref{rem:2}). Suppose that $R$ is von Neumann regular. Then, by Corollary \ref{cor:corner}, it follows that $R[t_{+}, t_{-}, \phi]$ is graded von Neumann regular. The conclusion now follows by Proposition \ref{prop:semi1}.
\end{proof}
 
 \section*{acknowledgement}
This research was partially supported by the Crafoord Foundation (grant no. 20170843). The author is grateful to Johan Öinert for giving a proof of Lemma \ref{lem:johan}. The author is also grateful to Stefan Wagner, Johan Öinert and Patrik Nystedt for giving feedback and comments that helped to improve this manuscript. 


\printbibliography

\end{document}